\newtheorem{theorem}{Theorem}[section]
\newtheorem{lemma}[theorem]{Lemma}
\newtheorem{corollary}[theorem]{Corollary}
\newtheorem{sublemma}{}[theorem]
\newcommand{\ba}{\backslash}
\newcommand{\cl}{{\rm cl}}
\newcommand{\fcl}{{\rm fcl}}
\newcommand{\si}{{\rm si}}
\newcommand{\thc}{$3$-connected}
\newcommand{\ths}{$3$-separation}
\newcommand{\ifc}{internally $4$-connected}
\newcommand{\cn}{contradiction}
\newcommand{\btu}{\bigtriangleup}
\newcommand{\ftv}{$(4,3)$-violator}
\newcommand{\ort}{orthogonality}
\newcommand{\ns}{non-sequential}
\newcommand{\al}{\alpha}
\begin{document}

\title[Every element in three triangles]{Internally $4$-connected binary matroids with every element in three triangles}

\author{Carolyn Chun}
\address{United States Naval Academy, Annapolis, Maryland, USA}
\email{chun@usna.edu}

\author{James Oxley}
\address{Department of
 Mathematics, Louisiana State University, Baton Rouge, Louisiana, USA}
\email{oxley@math.lsu.edu}

\subjclass{05B35, 05C40}
\keywords{binary matroid, internally $4$-connected}
\date{\today}

\begin{abstract} 
Let $M$ be an \ifc\ binary matroid with every element in three triangles. Then  $M$ has at least four elements   $e$ such that $\text{si}(M/e)$ is \ifc.
\end{abstract}

\maketitle

\section{Introduction}



Terminology in this note will follow~\cite{oxrox}.   A matroid is \emph{internally $4$-connected}  if it is $3$-connected and, for every $3$-separation $(X,Y)$ of $M$, either $X$ or $Y$ is a triangle or a triad of $M$.  

The purpose of this note is to prove the following technical result. 

\begin{theorem}
\label{bigun}
Let $M$ be a binary \ifc\ matroid in which every element is in exactly three triangles. Then $M$ has at least four elements $e$ such that $si(M/e)$ is \ifc. Morever, if $M$ has fewer than six such elements, then these elements are in a $4$-element cocircuit.
\end{theorem}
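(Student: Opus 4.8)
The plan is to study, one element at a time, the possible $3$-separations of $\cM_e:=\si(M/e)$, to show that the elements $e$ for which $\cM_e$ fails to be \ifc\ (call such $e$ \emph{bad}, and the others \emph{good}) are confined to a small, explicitly describable list of local configurations, and then to count. First I would pin down the local picture at an arbitrary $e$. Since $M$ is binary and \thc, the three triangles $T_1,T_2,T_3$ through $e$ pairwise meet only in $e$ (otherwise the symmetric difference of two of them would be a parallel pair), so $T_1\cup T_2\cup T_3$ has seven elements $e,a_1,b_1,a_2,b_2,a_3,b_3$, and taking symmetric differences shows that each $T_i\symdif T_j$ is a $4$-circuit while $T_1\symdif T_2\symdif T_3$ is either a $6$-circuit or a union of two triangles; this already constrains $M$ on those seven elements. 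In $M/e$ the pairs $\{a_i,b_i\}$ become the three nontrivial parallel classes, so $\cM_e$ is $M/e$ with one element of each pair deleted, $r(\cM_e)=r(M)-1$ and $|E(\cM_e)|=|E(M)|-4$.

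Next I would determine, for each $e$, exactly when $\cM_e$ fails to be \ifc. If $\cM_e$ is not even \thc, then $M/e$ has a $2$-separation that is not absorbed into the parallel classes, which corresponds to a vertical $3$-separation of $M$ through $e$; since $M$ is \ifc, its small side is a triangle or a triad, and pushing this back through the seven-element local picture forces $e$ to lie in a triad or in one of a short list of configurations on boundedly many elements. If instead $\cM_e$ is \thc\ but has a $3$-separation $(U,V)$ with neither side a triangle or a triad of $\cM_e$, I would lift $(U,V)$ to a $3$-separation of $M$ (adjusting by $e$ and by the deleted parallel elements), use internal $4$-connectivity of $M$ to take one side to be a triangle or a triad of $M$, and then use the fact that this set is \emph{not} a triangle or a triad of $\cM_e$ to localise $e$ once more. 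The fiddly part here is handling sequential and \ns\ $3$-separations of $\cM_e$ and the finitely many small matroids for which $\cM_e$ is \ifc\ only for degenerate reasons; those I would settle by direct inspection.

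Having shown that every bad $e$ lies in one of finitely many configuration types, each supported on boundedly many elements and each governed by the triangles and cocircuits through $e$, I would finish by counting: playing the number of triangles through each element (always three) against the sizes of the bad configurations gives at least four good elements, unless $M$ is one of a bounded list of small matroids, which I would check by hand. For the ``moreover'' clause, assuming there are at most five good elements forces all but at most five elements of $M$ to be absorbed by bad configurations, which rigidifies the triangle--cocircuit incidence structure of $M$ enough to identify $M$ up to a short list of cases; in each such case one checks, using \ort\ (each triangle through a good element meets each cocircuit in an even number of elements), that the good elements all lie in a single $4$-element cocircuit, and in particular that ``exactly five good elements'' does not occur. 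The main obstacle is the middle step: making the list of bad configurations both correct and exhaustive, especially controlling the sequential $3$-separations of $\cM_e$ and the small exceptional matroids, and then making the count sharp enough to deliver exactly four good elements rather than a weaker bound.
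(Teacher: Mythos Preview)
Your plan has a genuine error at its core. When $\cM_e=\si(M/e)$ has a $(4,3)$-violator $(U,V)$, you propose to ``lift $(U,V)$ to a $3$-separation of $M$'' and then invoke internal $4$-connectivity of $M$ to force one side to be a triangle or triad. But the lift is not a $3$-separation of $M$: after restoring the deleted parallel elements you get a $3$-separation $(U'',V'')$ of $M/e$, and since each side contains a full parallel class $\{a_i,b_i\}$, the element $e$ lies in $\cl_M(U'')\cap\cl_M(V'')$. A direct rank computation then shows $\lambda_M(U''\cup e)=\lambda_{M/e}(U'')+1=3$, so both $(U''\cup e,V'')$ and $(U'',V''\cup e)$ are exact $4$-separations of $M$, not $3$-separations. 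Internal $4$-connectivity says nothing about $4$-separations, so your localisation step collapses, and with it the ``bounded bad configurations, then count'' strategy.

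The paper's route is structurally different and is organised around exactly this obstacle. It splits on whether $M$ has a $4$-cocircuit. If it does, a direct argument (its Lemma~\ref{no5}) shows every element of that cocircuit is good, giving four good elements sitting in a $4$-cocircuit. If it does not, then one must genuinely analyse the non-sequential $4$-separations of $M$ that arise from lifting a $(4,3)$-violator of $\cM_e$; this is the long Lemma~\ref{no4}, which uses a minimal non-sequential $4$-separation $(X,Y)$, uncrosses it against the lifted $4$-separation via submodularity, and pushes through a detailed case analysis inside $PG(r-1,2)$ to produce at least six good elements. The ``moreover'' clause then falls out by contraposition: fewer than six good elements forces a $4$-cocircuit, whose four elements are good. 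Your outline never engages with $4$-separations of $M$, and the example in the paper's final section (an arbitrarily large $M$ with six bad elements forming an $M(K_4)$-restriction) shows that bad elements are not governed by purely local triangle/triad data in the way your counting argument assumes.
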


\section{Preliminaries}


This section introduces some  basic material relating to matroid connectivity. 
For a matroid $M$, let $E$ be the ground set of $M$ and $r$ be its rank function.   
The {\it connectivity function} $\lambda_M$ of $M$ is defined on all subsets $X$ of $E$ by $\lambda_M(X) = r(X) + r(E-X) - r(M)$.  Equivalently, $\lambda_M(X) = r(X) + r^*(X) - |X|.$ We will sometimes abbreviate $\lambda_M$ as $\lambda$.
For a positive integer $k$, a subset $X$ or a partition $(X,E-X)$ of $E$ is 
{\em $k$-separating} if $\lambda_M(X)\le k-1$.  A $k$-separating partition $(X,E-X)$ of $E$ is a {\it $k$-separation} if  $|X|,|E-X|\ge k$. If $n$ is an integer exceeding one, a matroid  is {\it $n$-connected} if it has  no $k$-separations for all $k < n$.  
 Let $(X,Y)$ be a \ths\ in a matroid $M$.  If $|X|,|Y|\geq 4$, then we call $X,Y$, or $(X,Y)$ a \emph{$(4,3)$-violator}  since it certifies that $M$ is not \ifc.  For example, if $X$ is a \emph{$4$-fan}, that is, a $4$-element set containing a triangle and a triad, then $X$ is a \ftv\ provided $|Y|\geq 4$.  
 

In a matroid $M$, a set $U$  is {\em fully closed} if it is closed in both $M$ and $M^*$. 
The {\it full closure} $\fcl(Z)$ of a set $Z$ in $M$ is the intersection of all fully closed sets containing $Z$.  
The full closure of $Z$ may be obtained by alternating between taking the closure and the coclosure until both operations leave the set unchanged.   
Let $(X,Y)$ be a partition of $E(M)$. If $(X,Y)$ is $k$-separating in $M$ for some positive integer $k$,  and $y$ is an element of  $Y$ that is also in $\cl(X)$ or $\cl ^*(X)$, then it is well known and easily checked that $(X \cup y,Y-y)$ is $k$-separating, and we say that we have {\it moved} $y$ into $X$. More generally, $(\fcl(X),Y-\fcl(X))$ is $k$-separating in $M$. 

The following elementary result will be used repeatedly.

\begin{lemma}
\label{3conn}
If $M$ is an \ifc\ binary matroid and $e\in E(M)$, then $\si (M/e)$ is \thc.  
\end{lemma}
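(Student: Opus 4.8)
The plan is to argue by contradiction, converting a $2$-separation of $\si(M/e)$ into a \ftv\ of $M$, with a single small case left over. Since $M$ is \thc, after dealing with the matroids on fewer than four elements (for which the statement is immediate) we may assume $|E(M)|\ge 4$, so that $M$ is simple and cosimple and, in particular, $e$ is neither a loop, a coloop, nor a member of a parallel or series pair. I would first record two routine facts: $M/e$ is $2$-connected, and the cocircuits of $M/e$ are precisely the cocircuits of $M$ avoiding $e$, so that (as $M$ has no coloops and no cocircuit of size at most two) $M/e$ is loopless and cosimple; hence $\si(M/e)$, the simplification of a $2$-connected matroid, is $2$-connected.

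Now suppose $\si(M/e)$ is not \thc, so it has a $2$-separation $(A,B)$ with $|A|,|B|\ge 2$. Restoring the parallel classes of $M/e$ gives a partition $(X,Y)$ of $E(M)-e$ into unions of parallel classes of $M/e$, with $A\subseteq X$, $B\subseteq Y$, and $\lambda_{M/e}(X)=\lambda_{\si(M/e)}(A)=1$. The key ingredient is the comparison $\lambda_M(Z)=\lambda_{M/e}(Z)$ when $e\notin\cl_M(Z)$ and $\lambda_M(Z)=\lambda_{M/e}(Z)+1$ when $e\in\cl_M(Z)$, valid for $Z\subseteq E(M)-e$. If $e\notin\cl_M(X)$, then $\lambda_M(X)=1$; since $|X|\ge 2$ and $|Y\cup e|\ge 2$, the partition $(X,Y\cup e)$ would be a $2$-separation of $M$, a contradiction. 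So $e\in\cl_M(X)$, whence $\lambda_M(X)=2$, so $(X,Y\cup e)$ is $3$-separating in $M$; and, since $e\in\cl_M(X)$, moving $e$ into $X$ shows that $(X\cup e,Y)$ is $3$-separating as well.

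Next I would bound $|X|$ and $|Y|$ below. If $|X|=2$, then $X$ is either a single parallel class of $M/e$, forcing the contradiction $|A|=1$, or a pair of non-parallel elements of $M/e$, in which case $r_{M/e}(X)=2$ and so $\lambda_{M/e}(X)=1$ forces $r^*_{M/e}(X)=1$, making $X$ a $2$-element cocircuit of $M/e$ and contradicting cosimplicity. Thus $|X|\ge 3$, and symmetrically $|Y|\ge 3$. If $|X|\ge 4$, then $(X,Y\cup e)$ is a $3$-separation of $M$ with both sides of size at least $4$ --- a \ftv; similarly, if $|Y|\ge 4$, then $(X\cup e,Y)$ is a \ftv. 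Either way $M$ fails to be \ifc, a contradiction. The one remaining case is $|X|=|Y|=3$, so $|E(M)|=7$; then $M$ is an \ifc\ binary matroid on seven elements, hence $M\cong F_7$ or $M\cong F_7^*$, and for each of these $\si(M/e)$ is easily checked to be \thc\ for every $e$ (indeed $\si(F_7/e)\cong U_{2,3}$, while $F_7^*$ has no triangles, so $\si(F_7^*/e)=F_7^*/e\cong M(K_4)^*$). This last contradiction completes the proof.

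The step I expect to be most delicate is the bookkeeping around parallel classes when passing between $M/e$ and $\si(M/e)$ --- ensuring the pulled-back partition $(X,Y)$ really satisfies $\lambda_{M/e}(X)=1$ and $|X|,|Y|\ge 2$ --- together with the verification that $e\in\cl_M(X)$, which is exactly what lets us slide $e$ to whichever side is convenient and, once $|X|\ge 4$ or $|Y|\ge 4$, exhibit a \ftv. The seven-element endgame is routine but does rely on knowing there is no \ifc\ binary matroid on seven elements other than $F_7$ and $F_7^*$.
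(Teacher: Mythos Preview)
Your proof is correct and follows the same approach as the paper's: pull back a hypothetical $2$-separation of $\si(M/e)$ through $M/e$ to a $3$-separating partition of $M$, then exhibit a \ftv\ once the small cases are out of the way. Your treatment of the two-element side via cosimplicity of $M/e$ is, if anything, a bit cleaner than the paper's structural analysis of a $2$-cocircuit of $\si(M/e)$, and you defer the seven-element case to the end rather than handling $|E(M)|\le 7$ upfront, but these are minor reorganisations of the same argument.
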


\begin{proof}

The result is easily checked if $|E(M)| < 4$, so we may assume that $|E(M)| \ge 4$. Since $M$ is $3$-connected and binary, $|E(M)| \ge 6$ and both $M/e$ and $\si(M/e)$ are $2$-connected. If $|E(M)| \in \{6,7\}$, then $M$ is isomorphic to $M(K_4), F_7$, or $F_7^*$ and again the result is easily checked. Thus we may assume that $|E(M)| \ge 8$. 

Now let $M' = \si(M/e)$ and suppose that $M'$ has a $2$-separation $(X,Y)$. We may assume that $|X| \ge |Y|$. Suppose $|Y| = 2$. Then $Y$ is a $2$-cocircuit $\{y_1,y_2\}$ of $M'$. As $\{y_1,y_2\}$ is not a $2$-cocircuit of $M/e$ and $M$ is binary, we see that, in $M/e$, either one or both of $y_1$ and $y_2$ is in a $2$-element parallel class. Thus we may assume that $M/e$ has $\{y_1,y'_1\}$ as a circuit and $\{y_1,y'_1,y_2\}$ as a cocircuit, or $M/e$ has $\{y_1,y'_1\}$ and $\{y_2,y'_2\}$ as circuits and has $\{y_1,y'_1,y_2,y'_2\}$ as a cocircuit. Hence $M$ has $\{e,y_1,y_1',y_2\}$ as a $4$-fan or has $\{y_1,y'_1,y_2,y'_2\}$ as both a circuit and a cocircuit. Since $|E(M)| \ge 8$, each possibility contradicts the fact that $M$ is \ifc. We conclude that $|Y| \ge 3$. 

Let $(X',Y')$ be obtained from $(X,Y)$ by adjoining each element of $E(M/e) - E(M')$ to the side of $(X,Y)$ that contains an element parallel to it. Then $r_{M/e}(X') = r_{M'}(X)$ and $r_{M/e}(Y') = r_{M'}(Y)$, so $(X',Y')$ is a $2$-separation of $M/e$. Hence $(X', Y' \cup e)$ and $(X' \cup e, Y')$ are $3$-separations of $M$. As $|Y' \cup e| \ge 4$ and $|E(M) \ge 8$, this gives a \cn.
\end{proof}

Let $n$ be an   integer exceeding one. If $M$ is $n$-connected, an $n$-separation $(U,V)$ of $M$ is {\it sequential} if $\fcl(U)$ or $\fcl(V)$ is $E(M)$. In particular, when $\fcl(U) = E(M)$, there is an ordering $(v_1,v_2,\ldots,v_m)$ of the elements of $V$ such that 
$U \cup \{v_m,v_{m-1},\ldots,v_i\}$ is $n$-separating for all $i$ in $\{1,2,\ldots,m\}$. When this occurs, the set $V$ is called {\it sequential}. 

\section{Small matroids}

We begin this section by noting two useful results.

\begin{lemma}
\label{dracula} 
Let $M$ be a matroid in which every element is in exactly three triangles. Then $M$ has exactly $|E(M)|$ triangles.
\end{lemma}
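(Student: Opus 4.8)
The plan is to use a straightforward double-counting argument on the incidences between elements and triangles, so essentially no matroid structure beyond the definition of a triangle is needed.

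First I would let $\cF$ denote the set of triangles of $M$ and count the pairs $(e,T)$ with $e\in T\in\cF$ in two ways. Since a triangle is a $3$-element circuit, each $T\in\cF$ contributes exactly $3$ to this count, so the total is $3|\cF|$. On the other hand, by hypothesis each element $e\in E(M)$ lies in exactly three triangles, so each $e$ contributes exactly $3$ to the count, giving a total of $3|E(M)|$. Equating the two expressions yields $3|\cF| = 3|E(M)|$, hence $|\cF| = |E(M)|$.

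The only point requiring any care is the bookkeeping: distinct triangles are distinct as $3$-element subsets of $E(M)$ (two distinct circuits cannot have the same ground set), so the map $(e,T)\mapsto$ incidence is well defined and the two counts are genuinely counting the same finite set of pairs. There is no real obstacle here; the result is purely enumerative and uses neither binarity nor any connectivity hypothesis.

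I would close by remarking that this lemma will be applied in the sequel to the \ifc\ binary matroids under consideration, where it pins down the number of triangles exactly in terms of $|E(M)|$, a fact used repeatedly in the case analysis that follows.
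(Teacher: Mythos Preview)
Your proof is correct and is essentially identical to the paper's own argument: both double-count the incidence pairs $(e,T)$ with $e\in T$ a triangle, obtaining $3|E(M)|$ from the element side and $3|\cF|$ from the triangle side. The closing remark is unnecessary but harmless.
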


\begin{proof}
Consider the set of ordered pairs $(e,T)$ where $e \in E(M)$ and $T$ is a triangle of $M$ containing $e$. The number of such pairs is $3|E(M)|$ since each element is in exactly three triangles. As each triangle contains exactly three elements, this number is also three times the number of triangles of $M$.
\end{proof}

\begin{lemma}
\label{noodd}
Let $M$ be an \ifc\ binary matroid in which every element is in exactly three triangles. Then $M$ has   no cocircuits of  odd size.  
\end{lemma}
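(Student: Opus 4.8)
The plan is to argue by contradiction, using the orthogonality between circuits and cocircuits in a binary matroid. So suppose that $M$ has a \coc\ $C^*$ of odd size, and fix an element $e$ of $C^*$.

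The key observation is that, since $M$ is binary, every triangle of $M$ meets $C^*$ in an even number of elements. Hence, whenever a triangle $T$ contains an element of $C^*$, we have $1\le |T\cap C^*|$, and as $|T\cap C^*|$ is even while $|T|=3$, it follows that $|T\cap C^*|=2$. In particular, each of the three triangles through $e$ contains exactly one further element of $C^*$.

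Now I would double-count the pairs $(f,T)$ for which $f\in C^*$ and $T$ is a triangle of $M$ containing $f$. Summing over the choices of $f$ and using the hypothesis that every element of $M$ lies in exactly three triangles, the number of such pairs is $3|C^*|$. Summing instead over triangles $T$, only triangles meeting $C^*$ contribute, and each of these contributes $|T\cap C^*|=2$ by the previous paragraph; so the number of pairs equals $2N$, where $N$ is the number of triangles of $M$ that meet $C^*$. Thus $3|C^*|=2N$, which forces $|C^*|$ to be even, a \cn.

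The argument is genuinely short; the only delicate point is the parity step that rules out $|T\cap C^*|\in\{1,3\}$ for triangles through an element of $C^*$, and this is precisely where binarity (via orthogonality) is used. It is worth noting that internal $4$-connectivity plays no role in this particular lemma: all that is needed is that $M$ is binary and that every element lies in exactly three triangles.
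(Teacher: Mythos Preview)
Your proof is correct and is essentially the same argument as the paper's. The paper phrases the count via an auxiliary graph on vertex set $C^*$ with an edge for each triangle meeting $C^*$, observes the graph is $3$-regular, and invokes the handshake lemma; your double count $3|C^*|=2N$ is exactly that handshake identity written out directly. (Minor remark: the sentence ``fix an element $e$ of $C^*$'' is never actually used in your counting and can be dropped.)
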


\begin{proof}
For a cocircuit $C^*$ of $M$, we construct an auxiliary graph $G$ as follows.  
Let $C^*$ be the vertex set of $G$, and let $c_1c_2$ be an edge exactly when $c_1$ and $c_2$ are members of $C^*$ that are contained in a triangle of $M$.  
Since every element in is three triangles of $M$, every vertex in $G$ has degree three by \ort\ and the fact that $M$ is binary.  
Hence $|C^*|$, which equals the number of vertices of $G$ with odd degree,  is even.  
\end{proof}

To prove the next lemma, we shall use the  following theorem of Qin and Zhou \cite{qz}.

\begin{theorem}
\label{qzthm}
Let $M$ be an \ifc\ binary matroid with no minor isomorphic to any of $M(K_{3,3})$, $M^*(K_{3,3})$, $M(K_5)$, or $M^*(K_5)$. Then either $M$ is isomorphic to the cycle matroid of a planar graph, or $M$ is isomorphic to $F_7$ or $F^*_7$.
\end{theorem}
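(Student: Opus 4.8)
The plan is to split the argument according to whether or not $M$ has a minor isomorphic to one of $F_7$ and $F_7^*$.

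Suppose first that $M$ has no $F_7$- and no $F_7^*$-minor. Combined with the hypothesis that $M$ has no $M^*(K_5)$- or $M^*(K_{3,3})$-minor, this says that $M$ is a binary matroid with no minor in $\{F_7,F_7^*,M^*(K_5),M^*(K_{3,3})\}$, so, by Tutte's excluded-minor characterization of graphic matroids, $M=M(G)$ for some graph $G$. Since $M$ has no $M(K_5)$- or $M(K_{3,3})$-minor, the graph $G$ has no $K_5$- or $K_{3,3}$-minor, and hence is planar by the theorem of Kuratowski and Wagner. Thus $M$ is the cycle matroid of a planar graph. (Internal $4$-connectivity plays no role in this case.)

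Now suppose that $M$ has an $F_7$- or an $F_7^*$-minor. The class of \ifc\ binary matroids having no minor in $\{M(K_{3,3}),M^*(K_{3,3}),M(K_5),M^*(K_5)\}$ is closed under duality, as is the conclusion of the theorem, and $\{F_7,F_7^*\}$ is self-dual; so we may assume $M$ has an $F_7$-minor, and it remains to show $M\cong F_7$. I would argue by contradiction, choosing a counterexample $M$ with $|E(M)|$ minimum. The small cases go by inspection: the only \thc\ binary matroid with at most seven elements and an $F_7$-minor is $F_7$ itself; and since $F_7=PG(2,2)$ has no \thc\ binary single-element extension (such an extension adds either a parallel element or a coloop), Seymour's Splitter Theorem forces a \thc\ binary matroid on eight elements with an $F_7$-minor to be a single-element \emph{coextension} of $F_7$. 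Listing the finitely many such coextensions --- among them $AG(3,2)$ and $S_8$ --- one checks that each contains a $4$-fan or a larger bad \ths, so none of them is \ifc. Hence $|E(M)|\ge 9$.

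For the inductive step, apply the Splitter Theorem to the \thc\ proper minor $F_7$ of the \thc\ matroid $M$: as $F_7$ is neither a wheel nor a whirl, there is an element $e$ such that $N:=M\backslash e$ or $N:=M/e$ is \thc\ and still has an $F_7$-minor. Any of the four forbidden minors in $N$ would be a minor of $M$, so $N$ has none of them; and if $N$ were \ifc\ then, by minimality of $M$, we would get $N\cong F_7$, contradicting $|E(N)|\ge 8$. So $N$ is not \ifc, and therefore has a \ftv\ $(X,Y)$ which, after moving elements into full closures, we may take to be a $4$-fan or to be \ns. The main obstacle is to analyse how $(X,Y)$ sits relative to an $F_7$-restriction of $N$ and relative to the element $e$ of $M$, using \ort\ between \cir s and \coc s together with the special structure of $F_7$ as the projective plane $PG(2,2)$, and to show that in every configuration either $(X,Y)$ was not in fact a \ftv, or one of $M(K_{3,3})$, $M^*(K_{3,3})$, $M(K_5)$, $M^*(K_5)$ is a minor of $M$ --- a \cn. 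Equivalently, the technical heart of the argument is the structural statement that every \thc\ binary matroid on at least nine elements that has an $F_7$-minor but none of the four forbidden minors fails to be \ifc; carrying out this case analysis, and bootstrapping a forbidden minor out of each surviving configuration, is where essentially all the difficulty lies.
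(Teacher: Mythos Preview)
This theorem is not proved in the paper at all: it is quoted as a result of Qin and Zhou and used as a black box in the proof of Lemma~3.3. So there is no ``paper's own proof'' to compare against, and you were not expected to supply one.

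That said, what you have written is not a proof either. The first paragraph is fine: Tutte's excluded-minor theorem for graphic matroids plus Kuratowski--Wagner handles the $F_7$-free case cleanly. But in the second half you set up a minimal-counterexample argument, dispose of $|E(M)|\le 8$ by inspection, and then for $|E(M)|\ge 9$ you write only that one should analyse how a \ftv\ of a \thc\ one-element minor $N$ sits relative to an $F_7$-restriction and to $e$, concluding that ``carrying out this case analysis, and bootstrapping a forbidden minor out of each surviving configuration, is where essentially all the difficulty lies.'' That sentence is an accurate description of the Qin--Zhou paper, not a proof. You have not exhibited, in any configuration, the promised $M(K_{3,3})$-, $M^*(K_{3,3})$-, $M(K_5)$-, or $M^*(K_5)$-minor, nor even indicated which of the four arises in which case; the Splitter Theorem only hands you a \thc\ minor $N$ with a \ftv, and turning that into one of the four Kuratowski matroids inside $M$ is the entire content of the theorem. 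As written this is an outline of a strategy, with the substantive step missing.
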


\begin{lemma}
\label{smalltime}
Let $M$ be an \ifc\ binary matroid in which every element is in exactly three triangles and $|E(M)| \le 13$. Then $M$ is isomorphic to $F_7$ or $M(K_5)$. Hence $\si(M/e)$ is \ifc\ for all elements $e$ of $M$.
\end{lemma}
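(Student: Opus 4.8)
The plan is to reduce to a finite check by combining the counting lemmas already established with the excluded-minor characterization of Qin and Zhou. First I would use Lemma~\ref{noodd} to deduce that $M$ has no odd cocircuits, and in particular no triads; consequently the minimum cocircuit size is at least $4$, which forces $r^*(M)$ to be reasonably large and, dually, constrains $r(M)$. Combined with Lemma~\ref{dracula}, which says $M$ has exactly $|E(M)|$ triangles, this puts $M$ in a narrow range: with $|E(M)|\le 13$ and every cocircuit of even size at least $4$, while also having $|E(M)|$ triangles and being simple and cosimple, the rank is pinned down to a short list of values.

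Next I would invoke Theorem~\ref{qzthm}. The key observation is that an \ifc\ binary matroid with every element in three triangles and $|E(M)|\le 13$ is too small to contain all of $M(K_{3,3})$, $M^*(K_{3,3})$, $M(K_5)$, $M^*(K_5)$ as minors while still meeting the triangle and cocircuit-parity constraints; more precisely, I would argue that if $M$ has a minor isomorphic to one of these, then either $|E(M)|$ is forced above $13$ or the triangle-count/even-cocircuit conditions are violated. Here I would probably handle $M(K_5)$ itself directly: $M(K_5)$ has $10$ elements, every element in exactly three triangles (each edge of $K_5$ lies in exactly three triangles of $K_5$), is \ifc, and has all cocircuits of even size, so it is one of the desired conclusions. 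Once the $M(K_5)$ and $M^*(K_5)$, $M(K_{3,3})$, $M^*(K_{3,3})$ minor cases are dispatched, Theorem~\ref{qzthm} leaves only planar graphic matroids, $F_7$, and $F_7^*$.

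Then I would finish by analyzing the remaining possibilities. For $F_7$: every element lies in exactly three triangles (the three lines of the Fano plane through a point), it is \ifc, so $F_7$ is on the list. For $F_7^*$: I would check whether its elements lie in exactly three triangles, and rule it out (or keep it) accordingly—$F_7^*$ has triangles that do not meet the hypothesis in the required way, so it is excluded. For $M(G)$ with $G$ planar and simple: the condition that every edge lies in exactly three triangles is extremely restrictive for a $3$-connected planar graph, and I would argue that the only such $G$ with $|E(G)|\le 13$ giving an \ifc\ matroid is $K_5$—but $K_5$ is not planar, so in fact no new planar examples arise, or one must re-examine: since $M(K_5)$ is not graphic-planar, the planar case must be handled by showing no planar $3$-connected graph on at most $13$ edges has every edge in three triangles while being \ifc, which is a short finite case analysis on vertex and edge counts via Euler's formula. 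The main obstacle I expect is this last planar-graph case: translating ``every edge in exactly three triangles'' into a usable combinatorial constraint on a planar triangulation-like graph and ruling out sporadic small examples cleanly, rather than by brute enumeration. Once all cases yield $M\cong F_7$ or $M\cong M(K_5)$, the final sentence follows because $\si(F_7/e)\cong M(K_4)$ and $\si(M(K_5)/e)\cong M(K_4)$ (or a similarly small \ifc\ matroid) for every $e$, which is \ifc\ by inspection.
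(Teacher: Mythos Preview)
Your overall architecture matches the paper's: invoke Qin--Zhou, discard $F_7^*$ (it has no triangles), handle the planar-graphic case by degree constraints, and separately treat the case that $M$ has one of the four excluded minors. The gap is in this last case. You assert that having such a minor would force $|E(M)|>13$ or violate the parity/triangle conditions, but you offer no mechanism, and size alone does not do it: $M^*(K_{3,3})$ has nine elements and six triangles, so a priori there is room for $3$- or $4$-element extensions. The paper's argument here is the real content of the lemma. It applies the Splitter Theorem to obtain a $3$-connected chain $N=M_0,\ldots,M_k=M$ with $k\le 4$, then splits on whether any step is a contraction. If $M/e$ has an $N$-minor, then since $e$ lies in three triangles, $|E(\si(M/e))|\le |E(M)|-4\le 9$, which forces $\si(M/e)\cong N\cong M^*(K_{3,3})$ and $|E(M)|=13$; a delicate argument using the even-cocircuit condition on $M/e$ and the triangle structure of $M^*(K_{3,3})$ then yields a contradiction. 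If instead $M=N$ plus extensions by a set $D$, then $N$ retains at least $|E(M)|-3|D|>0$ triangles, ruling out $M(K_{3,3})$ and $M^*(K_5)$; $M(K_5)$ is excluded because its elements are already in three triangles, and $M^*(K_{3,3})$ is eliminated by counting the triangles that must use two elements of $D$. None of this is visible in your outline.

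Two smaller corrections. In the planar case the paper does not use Euler's formula: it uses that each vertex has even degree at least four (from internal $4$-connectedness plus Lemma~\ref{noodd}), so $|E(G)|\ge 2|V(G)|$; with $|E(G)|\le 13$ and $|V(G)|\ge 6$ this forces $G$ to be $K_6$ minus a perfect matching, whose edges lie in only two triangles. And $\si(F_7/e)\cong U_{2,3}$, not $M(K_4)$; this is still trivially internally $4$-connected, so your conclusion stands, but the identification should be fixed.
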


\begin{proof} 
Assume that $M$ is not isomorphic to $F_7$ or $M(K_5)$. Suppose first that $M$ has none of $M(K_{3,3})$, $M^*(K_{3,3})$, $M(K_5)$, or $M^*(K_5)$ as a minor. As $F^*_7$ has no triangles, it follows that $M$ is isomorphic to the cycle matroid of a planar graph $G$. As every edge of $G$ is in exactly three triangles, but $M(G)$ is \ifc, every vertex has degree at least four. Hence $|E(G)| \ge 2|V(G)|$. Moreover, by Lemma~\ref{noodd}, every vertex of $G$ has even degree. Clearly $|V(G)| \neq 4$. Moreover, $|V(G)| \neq 5$, otherwise $M\cong M(K_5)$; a \cn. As $|E(G)| \le 13$, it follows that $|V(G)| = 6$ and $|E(G)| = 12$. Then $G$ is obtained from $K_6$ by deleting the edges of a perfect matching. But no edge of this graph is in exactly three triangles.

We may now assume that $M$ has an $N$-minor for some $N$ in $\{M(K_{3,3}),  M^*(K_{3,3}), M(K_5), M^*(K_5)\}$. By the Splitter Theorem for $3$-connected matroids, there is a sequence $M_0,M_1, \dots,M_k$ of $3$-connected matroids such that $M_0 \cong N$ and $M_k \cong M$, while $|E(M_{i+1}) - E(M_i)| = 1$ for all $i$ in $\{0,1,\ldots,k-1\}$. Since $|E(M) \ge 9$ and $|E(M)| \le 13$, it follows that $k \in \{0,1,2,3,4\}$. 

Suppose that some $M_i$ is obtained from its successor by contracting an element $e$. Then $M/e$ has an $N$-minor. But $\si(M/e)$ has at most nine elements. Thus $|E(M)| = 13$ and $N$ is $M(K_{3,3})$ or  $M^*(K_{3,3})$. Since $\si(M/e)$ must contain triangles, $N$ is   $M^*(K_{3,3})$. Now, by Lemma~\ref{noodd}, every cocircuit of  $M/e$ is even.  Moreover, $M/e$ has exactly three 2-circuits. The union of these three 2-circuits cannot have rank two in $M/e$ otherwise $M$ has $F_7$ as a restriction but the remaining six elements of $M$ cannot all be in exactly three triangles of $M$. 
Let $a, b$ and $c$ be the three elements of $M^*(K_{3,3})$ that are in $2$-circuits in $M/e$. Then one easily checks that there are two intersecting triangles of $M^*(K_{3,3})$ whose union  contains exactly two elements of $\{a,b,c\}$. The  cocircuit of $M/e$ whose complement is the union of the closure of these  two triangles is odd; a \cn.

We now know that $M$ is an extension of $N$ by at most four elements. Let $N  = M\ba D$. Then $|D| \ge 1$ so $|E(M)| \ge 10$. Moreover, 
$N$ has at least $|E(M)| - 3|D|$ triangles. It is straightforward to check that the last number is  positive, so $N$ cannot be $M(K_{3,3})$ or $M^*(K_5)$.  Thus $N$ is $M^*(K_{3,3})$ or $M(K_5)$. Each element of $M(K_5)$ is in three triangles, so $N \neq M(K_5)$ since each element of $E(M) - E(N)$ must be in a triangle with some element of $M(K_5)$; a \cn. We deduce that $N = M^*(K_{3,3})$. Now  $M^*(K_{3,3})$ has exactly six triangles with each element being in precisely two triangles. Thus, in $M$, there are six triangles each containing a single element of $M^*(K_{3,3})$ and two elements of $E(M) - E(N)$. As $|E(M)| - E(N)| \le 4$, there are at most six triangles containing exactly two elements of $E(M) - E(N)$. We deduce that $|E(M)| = 13$ so $M$ can be obtained from $PG(3,2)$ by deleting exactly two elements. As $PG(3,2)$ has exactly seven triangles containing each element, deleting two elements leaves each element in at least five triangles; a \cn. 
\end{proof}

\section{Small cocircuits}

In this section, we move towards proving the main result by dealing with 4-cocircuits and certain special 6-cocircuits in $M$. 
Throughout the section, we will assume that $M$ is an \ifc\ binary matroid  in which every element is in exactly three triangles, and $|E(M)| \ge 14$.

\begin{lemma}
\label{no5}
If $C^*$ is a $4$-element cocircuit of $M$, then, for all $e$ in $C^*$, the matroid $\si(M/e)$ is \ifc\ having no triads.
\end{lemma}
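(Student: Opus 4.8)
The plan is to analyze what can go wrong when we contract a single element $e$ of the $4$-cocircuit $C^*$ and then simplify. By Lemma~\ref{3conn}, $\si(M/e)$ is already \thc, so the two things we must rule out are the existence of a triad in $\si(M/e)$ and, more generally, the existence of a $(4,3)$-violator in $\si(M/e)$. Since $|E(M)|\ge 14$, the small-matroid obstructions of Lemma~\ref{smalltime} are not available, but the hypothesis that every element of $M$ lies in exactly three triangles, together with Lemma~\ref{noodd} (no odd cocircuits), will be the main engine. I would first set up notation: write $C^* = \{e, f_1, f_2, f_3\}$, and note that after contracting $e$ the three elements $f_1,f_2,f_3$ form a triad $\{f_1,f_2,f_3\}$ of $M/e$ unless some $f_i$ lies in a parallel pair in $M/e$; by orthogonality and the structure of the triangles through $e$ in the binary matroid $M$, I would pin down exactly which parallel pairs can arise.

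The core step is to show $\si(M/e)$ has no triad. Suppose $T^*$ is a triad of $\si(M/e)$. Pulling $T^*$ back to $M/e$ and then to $M$, orthogonality with the triangles guarantees that $T^*\cup\{e\}$ (or a related four- or five-element set, after accounting for parallel elements) is a cocircuit of $M$ of size at most five; combined with Lemma~\ref{noodd} this forces it to have size exactly four, so $M$ has a second $4$-cocircuit interacting with $C^*$. A $4$-element cocircuit in an \ifc\ matroid that meets a triangle must do so in exactly two elements (orthogonality forbids one, and a cocircuit meeting a triangle in its complement would be small), and chasing how the three triangles through each element of $T^*\cup e$ are forced to sit relative to these two $4$-cocircuits produces either a $4$-fan or a small separator in $M$, contradicting internal $4$-connectivity. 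I expect this triad-elimination argument, with its careful bookkeeping of which of $f_1,f_2,f_3$ become parallel and how the triangles distribute, to be the main obstacle: there are several configurations to sort through, and keeping the parity constraint from Lemma~\ref{noodd} in play throughout is what makes them collapse.

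Finally, I would show $\si(M/e)$ is \ifc, i.e., has no $(4,3)$-violator. Suppose $(X,Y)$ is a $3$-separation of $\si(M/e)$ with $|X|,|Y|\ge 4$. Lifting through the parallel classes as in the proof of Lemma~\ref{3conn}, I would produce a $3$-separation $(X',Y')$ of $M/e$, hence a $4$-separation — or, after moving $e$ into the appropriate side using the closure/coclosure, a $3$-separation — of $M$; since $M$ is \ifc, whichever side does not gain $e$ must be a triangle or a triad of $M$, which forces $X$ or $Y$ to be small (a triangle, a triad, or something that becomes one after simplification) in $\si(M/e)$, contradicting that it is a $(4,3)$-violator. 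The subtlety here is that contracting $e$ can turn a triad of $M$ into a smaller cocircuit and can merge a triangle of $M$ into a parallel pair, so I would need the "no triads in $\si(M/e)$'' conclusion already in hand — which is why the triad step comes first — and I would use the hypothesis on triangles to control the few exceptional configurations where the small side of the $3$-separation of $M$ sits across the parallel classes created by contracting $e$.
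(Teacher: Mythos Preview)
Your plan has the right architecture, but both steps contain genuine gaps.

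For the triad step: a triad $\{\beta,\gamma,\delta\}$ of $\si(M/e)=M/e\backslash f_1,f_2,f_3$ does \emph{not} lift to a cocircuit $T^*\cup\{e\}$ of $M$. Cocircuits of $M/e$ are exactly the cocircuits of $M$ avoiding $e$, so the lift is $\{\beta,\gamma,\delta\}\cup S$ with $S\subseteq\{f_1,f_2,f_3\}$, and Lemma~\ref{noodd} forces $|S|\in\{1,3\}$, not ``size at most five.'' The case $|S|=3$ produces a $6$-cocircuit, not a second $4$-cocircuit, and your outline gives no argument for it. The paper handles $|S|=3$ by showing via orthogonality that $\{\beta,\gamma,\delta\}=\{g_1,g_2,g_3\}$ and then that $Z=\{e,f_1,f_2,f_3,g_1,g_2,g_3\}$ has $\lambda(Z)\le 2$; it handles $|S|=1$ by a similar $\lambda$-count on $C^*\cup D^*$ for the resulting second $4$-cocircuit $D^*$.

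For the $(4,3)$-violator step: your lifting argument breaks precisely when $e\in\cl(X')\cap\cl(Y')$, which happens whenever the $g_i$ are split between the two sides. Then both $(X'\cup e,Y')$ and $(X',Y'\cup e)$ are $4$-separations of $M$, and internal $4$-connectivity says nothing about those. The paper's key idea, which you are missing, is to first pin down the local structure around $C^*$: orthogonality forces the triangles through each $f_i$ to be $\{e,f_i,g_i\}$ together with $\{f_i,f_j,h_k\}$ for suitable $h_k$, and symmetric differences then show that $\{g_1,g_2,g_3,h_1,h_2,h_3\}$ is an $M(K_4)$-restriction of $\si(M/e)$. Any \emph{non-sequential} $3$-separation of $\si(M/e)$ can be taken with this $M(K_4)$ entirely on one side; adding back $f_1,f_2,f_3,e$ to that side then yields a genuine $(4,3)$-violator of $M$, a contradiction. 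Thus every $(4,3)$-violator of $\si(M/e)$ is sequential, hence contains a triad, and the triad step (done correctly) finishes the proof.
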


\begin{proof}
Suppose that  $C^*=\{e,f_1,f_2,f_3\}$ and $\si(M/e)$ is not \ifc. 
As $M$ is \ifc, $r(C^*)=4$.  
As $e$ is in three triangles of $M$, there are elements $\{g_1,g_2,g_3\}$ such that $\{e,f_i,g_i\}$ is a triangle for all $i$.  
As $f_i$ is in three triangles for all $i$, by \ort\ and the fact that $M$ is binary, there are elements $\{h_1,h_2,h_3\}$ such that $\{f_1,f_2,h_1\},\{f_1,f_3,h_3\}$, and $\{f_2,f_3,h_2\}$ are triangles.  
This forces $\{g_1,g_2,h_1\},\{g_1,g_3,h_3\}$, and $\{g_2,g_3,h_2\}$ to be triangles, so $g_i$ is in no other triangle of $M$ for all $i$.  

Let $M'=\si (M/e)=M/e\ba f_1,f_2,f_3$.  
Lemma~\ref{3conn} implies that $M'$ is \thc.  
The set $\{g_1,g_2,g_3,h_1,h_2,h_3\}$ forms an $M(K_4)$-restriction in $M'$.  
Suppose $M'$ has a \ns\ \ths. Then we may assume that $\{g_1,g_2,g_3,h_1,h_2,h_3\}$ is contained in one side of the \ths.  
Since $\{f_i,g_i\}$ is a circuit in $M/e$, we may add $f_1,f_2$, and $f_3$ to the side containing the $M(K_4)$-restriction, and then add $e$ to get a \ftv\ of $M$; a \cn.  
We deduce that a \ftv\ of $\si (M/e)$ is a sequential $3$-separation.  

We show next that 

\begin{sublemma}
\label{notriads}
$M/e\ba f_1,f_2,f_3$ has no triads. 
\end{sublemma}

Suppose $M/e\ba f_1,f_2,f_3$ has a triad $\{\beta,\gamma,\delta\}$. Then $M\ba f_1,f_2,f_3$ has  $\{\beta,\gamma,\delta\}$ as a cocircuit. By Lemma~\ref{noodd}, we may assume that $\{\beta,\gamma,\delta,f_1,f_2,f_3\}$ or $\{\beta,\gamma,\delta,f_1\}$ is a cocircuit of $M$. By \ort, in the first case, 
$\{\beta,\gamma,\delta\} = \{g_1,g_2,g_3\}$ while, in the second case, $g_1 \in  \{\beta,\gamma,\delta\}$. In the first case, let $Z = \{e,f_1,f_2,f_3,g_1,g_2,g_3\}$. Then $r(Z) \le 4$ while $|Z| - r^*(Z) \ge 2$, so $\lambda(Z) \le 2$, a \cn\ as $|E(M)| \ge 14$. 

In the second case, $M$ has a $4$-cocircuit $D^*$ such that $C^* \cap D^* = \{f_1\}$ and $g_1 \in D^*$. Apart from $\{f_1,e,g_1\}$, the other triangles containing $f_1$ must meet $C^* - \{f_1,e\}$ in distinct elements and must meet $D^* - \{f_1,g_1\}$ in distinct elements. Thus $r(C^* \cup D^*) \le 4$ and $|C^* \cup D^*| - r^*(C^* \cup D^*) \ge 2$, so $\lambda(C^* \cup D^*) \le 2$; a \cn\ since $|E(M)| \ge 14$. Thus \ref{notriads} holds.

By \ref{notriads}, $M/e\ba f_1,f_2,f_3$ has no $4$-fans and so has no sequential $3$-separation that is a \ftv. This \cn\ completes the proof. 
\end{proof}

\begin{lemma}
\label{no6}
Take $e\in E(M)$ and the three triangles $T_1,T_2,$ and $T_3$ containing $e$.  
If $(T_1\cup T_2\cup T_3)-e$ is a cocircuit $C^*$, then $\si (M/x)$ is \ifc\ for every  element $x$ of $C^*$.  
\end{lemma}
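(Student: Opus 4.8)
The plan is to mimic the proof of Lemma~\ref{no5}. Write $T_i=\{e,f_i,g_i\}$ for $i\in\{1,2,3\}$. Since $M$ is binary and simple, distinct triangles through $e$ meet only in $e$, so $C^*$ has exactly six elements, namely $f_1,g_1,f_2,g_2,f_3,g_3$. Orthogonality gives two facts at once: $\{f_1,f_2,f_3\}$ cannot be a triangle of $M/e$ (that would make $\{e,f_1,f_2,f_3\}$ a circuit of $M$ meeting $C^*$ in an odd number of elements), so $r(C^*\cup e)=r(C^*)=4$ and $\lambda(C^*)=3$; and, for $i\neq j$, the set $T_i\symdif T_j=\{f_i,g_i,f_j,g_j\}$ is a $4$-circuit of $M$. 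Now every element of $C^*$ lies in its triangle $T_i$ and in two further triangles, each of which meets $C^*$ in exactly one more element; the resulting $3$-regular ``link graph'' on $C^*$ is, by the $4$-circuit identities, invariant under the involution $f_i\leftrightarrow g_i$. As the only $3$-regular graphs on six vertices are $K_{3,3}$ and the triangular prism, the triangles of $M$ other than $T_1,T_2,T_3$ form one of exactly two families: either the triangles $\{f_i,g_j,y_{ij}\}$ and $\{f_j,g_i,y_{ij}\}$ for $i<j$, or the triangles $\{f_i,f_j,w_{ij}\}$ and $\{g_i,g_j,w_{ij}\}$ for $i<j$, where in both cases the new elements are distinct and lie outside $C^*\cup e$. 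In either family the symmetries act transitively on the six elements of $C^*$, so it is enough to prove that $\si(M/f_1)$ is \ifc.

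Put $M'=\si(M/f_1)$; by Lemma~\ref{3conn}, $M'$ is \thc. Contracting $f_1$ creates the parallel pair $\{e,g_1\}$ from $T_1$ and two further parallel pairs from the other two triangles through $f_1$, and in either family a direct check shows that $M'$ has an $M(K_4)$-restriction on $\{e,f_2,g_2,f_3,g_3\}$ together with $y_{23}$ (resp.\ $w_{23}$); call this set $S_0$. Suppose $M'$ has a non-sequential \ftv\ $(X,Y)$. Since $M(K_4)$ is \thc, after moving at most a few elements across the separation using closure and coclosure in $M'$ (just as in Lemma~\ref{no5}), we may assume $S_0\subseteq X$. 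Pulling $(X,Y)$ back to $M/f_1$, adjoining to $X$ the elements in the parallel pairs that meet $S_0$, and then adjoining $f_1$, we obtain a $3$-separation of $M$ one side of which has ten elements; since $|E(M)|\ge 14$, the other side has at least four elements, contradicting internal $4$-connectivity. Hence $M'$ has no non-sequential \ftv.

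To handle sequential violators it suffices to prove that $M'$ has no triad: then $M'$ has no $4$-fan, and in a \thc\ matroid with no $4$-fan one side of every sequential \ths\ has at most three elements, so $M'$ has no sequential \ftv\ either. Write $M'=M/f_1\ba D$, where $D$ contains one element of each of the three parallel pairs created by contracting $f_1$. A triad $\{\beta,\gamma,\delta\}$ of $M'$ would come from a cocircuit $C_0^*$ of $M$ with $f_1\notin C_0^*$, $\{\beta,\gamma,\delta\}\subseteq C_0^*$, and $C_0^*-\{\beta,\gamma,\delta\}\subseteq D$; by Lemma~\ref{noodd}, $|C_0^*|\in\{4,6\}$. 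One now goes through the possibilities for which parallel pairs lie inside $C_0^*$ and for the position of $\{\beta,\gamma,\delta\}$ relative to $C^*\cup e$, and in each case, using \ort\ of $C_0^*$ with the triangles $T_i$, with the $4$-circuits $T_i\symdif T_j$, and with the triangles of the relevant family, one pins $C_0^*$ down to a short list and then exhibits a set $W$ with at most ten elements and $\lambda(W)\le 2$; as $|E(M)-W|\ge 4$, this contradicts internal $4$-connectivity. Therefore $M'$ has no triad, which completes the proof.

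The one substantial step is this triad analysis: it must be run for each of the two triangle families and for the several sub-cases governed by $|C_0^*|$ and by the position of $\{\beta,\gamma,\delta\}$, and in each sub-case one needs the $M(K_4)$-restriction and the $4$-circuits $T_i\symdif T_j$ to force a cocircuit small enough to produce the set $W$ with $\lambda(W)\le 2$. Everything else---the rank computation for $C^*$, the reduction to the two triangle families, the $M(K_4)$-restriction in $\si(M/f_1)$, and the non-sequential pullback---parallels the corresponding parts of the proof of Lemma~\ref{no5}.
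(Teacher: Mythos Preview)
Your outline follows the same architecture as the paper's proof: identify the triangle structure on $C^*$, locate an $M(K_4)$-restriction in $\si(M/x)$, pull a \ftv\ back to $M$ to rule out the non-sequential case, and then exclude triads to kill the sequential case. The set-up through the $M(K_4)$-restriction and the pull-back is fine (modulo a small imprecision: your dichotomy ``$K_{3,3}$ versus prism'' is correct as a statement about unlabelled $3$-regular graphs, but the prism can appear with its two triangles labelled $\{f_1,g_2,g_3\}$ and $\{g_1,f_2,f_3\}$ rather than $\{f_1,f_2,f_3\}$ and $\{g_1,g_2,g_3\}$; you need one more relabelling $f_1\leftrightarrow g_1$ to land in your Family~2).

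The genuine gap is the triad step, which you explicitly leave as a promised case analysis. The paper does not split into families and sub-cases here; it gives a uniform two-line argument that you are missing. With $X$ the $M(K_4)$-set and $Y$ the three deleted elements, any triad $\{\beta,\gamma,\delta\}$ of $M'$ extends to a cocircuit $D^*$ of $M$ equal to $\{\beta,\gamma,\delta\}\cup Y$ or $\{\beta,\gamma,\delta\}\cup y$ for some $y\in Y$ (by Lemma~\ref{noodd}). In either case \ort\ with a triangle through an element of $Y$ forces $\{\beta,\gamma,\delta\}$ to meet $X$; since $M|X\cong M(K_4)$, every element of $X$ lies in two triangles of $X$, and \ort\ with those triangles then forces $\{\beta,\gamma,\delta\}\subseteq X$. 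Now $X\cup Y\cup f_3$ is a ten-element set of rank four containing the two distinct cocircuits $C^*$ and $D^*$, so $\lambda(X\cup Y\cup f_3)\le 2$, contradicting $|E(M)|\ge 14$. This single observation replaces the entire ``run through the possibilities'' you propose, and it is what makes the paper's proof short. Without it, your sketch is not yet a proof.
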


\begin{proof}
 Let $T_i=\{e,f_i,g_i\}$ for each $i\in\{1,2,3\}$.  
Note that $T_1,T_2$, and $T_3$ are not coplanar,  otherwise their union forms an $F_7$-restriction, and $C^*$ contains a triangle; a \cn\ to the fact that $M$ is binary.  Suppose the lemma fails. Then we may assume that $\si(M/f_3)$ is not \ifc. 

As $f_1$ is in two triangles other than $T_1$, \ort\ and the fact that $M$ is binary imply that each  of these triangles contains an element of $\{f_2,g_2,f_3,g_3\}$.  
If $\{f_1,f_2\}$ and $\{f_1,g_2\}$ are each contained in a triangle, then the plane containing $T_1$ and $T_2$ is an $F_7$-restriction, so $e$ is in a fourth triangle; a \cn.  
Hence $f_1$ is in a single triangle with an element of $\{f_2,g_2\}$ and a single triangle with an element of $\{f_3,g_3\}$.  
Without loss of generality, $\{f_1,g_2,x_1\}$ and $\{f_1,g_3,x_2\}$ are triangles.  
By taking the symmetric difference of these triangles with the circuits $\{f_1,g_1,f_2,g_2\}$ and $\{f_1,g_1,f_3,g_3\}$, respectively, we see that $\{g_1,f_2,x_1\}$ and $\{g_1,f_3,x_2\}$ are also triangles.  
We have now identified all three of the triangles containing each element in $\{f_1,g_1\}$. But,  for each element in $\{f_2,g_2,f_3,g_3\}$, one of the triangles containing the element remains undetermined.
  
Either $\{f_2,g_3,x_3\}$ and $\{g_2,f_3,x_3\}$ are triangles, or  $\{f_2,f_3,y_3\}$ and $\{g_2,g_3,y_3\}$ are triangles.  
In each of these cases, we will  obtain the \cn\  that $\si (M/f_3)$ is \ifc.  
By Lemma~\ref{3conn}, $M'=\si (M/f_3)$ is \thc.  
Take $(U,V)$ to be  a \ftv\ in $M'$. 

Let $X = \{e,f_1,f_2,g_1,g_2,x_1\}$. Clearly the restriction of $M/f_3$ to $X$ is  isomorphic to $M(K_4)$. We may assume that $M' = M/f_3\ba Y$ where $Y$ is $\{g_3,x_2,x_3\}$ or $\{g_3,x_2,y_3\}$ depending on whether $\{f_3,g_2,x_3\}$ or $\{f_3,f_2,y_3\}$ is a triangle of $M$. Without loss of generality, we may also assume that $U$ spans $X$ in $M'$. Then $(U \cup X, V- X)$ is $3$-separating in $M'$ and it follows that $(U \cup X \cup Y \cup f_3, V - X)$ is 3-separating in $M$. Since $M$ has no \ftv, we deduce that $V$ is a sequential $3$-separating set in $M'$. Thus $M'$ has a triad 
$\{\beta, \gamma, \delta\}$.  By Lemma~\ref{noodd}, $M$ has a cocircuit $D^*$ where $D^*$ is $\{\beta, \gamma, \delta\} \cup Y$ or $\{\beta, \gamma, \delta\} \cup y$ for some $y$ in $Y$. In the first case, by \ort, $\{\beta, \gamma, \delta\} \subseteq X$. The last inclusion also follows by \ort\ in the second case since  $\{\beta, \gamma, \delta\}$ must meet $X$ and $M|X \cong M(K_4)$. Hence $X \cup Y \cup f_3$ contains at least two cocircuits. 
Since $r(X \cup Y \cup f_3) = 4$, it follows that $\lambda(X \cup Y \cup f_3) \le 2$; a \cn\ as $|E(M)| \ge 14$.
\end{proof}

\begin{lemma}
\label{seqsep}
Let $(X,Y)$ be an exact $4$-separation in $M$ with $X \subseteq \fcl(Y)$. If $M$ has no $4$-cocircuits, then 
 $X$ is coindependent, $r(X) = 3$, and $X \subseteq \cl(Y)$.
\end{lemma}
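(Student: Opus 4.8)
The plan is to reduce the three conclusions to one and then argue by induction on $|X|$. Since $\lambda(X)=3$, the three conclusions are equivalent: if $X$ is coindependent then $3=\lambda(X)=r(X)+r^*(X)-|X|=r(X)$; if $r(X)=3$ then $\lambda(X)=r(X)+r(Y)-r(M)=3$ gives $r(Y)=r(M)$, so $X\subseteq\cl(Y)$; and if $X\subseteq\cl(Y)$ then $Y$ is spanning, so no cocircuit of $M$ lies in $X$, as its complement would be a hyperplane containing the spanning set $Y$. Thus it suffices to prove $r(X)=3$. We note that, by Lemma~\ref{noodd} and the hypothesis that $M$ has no $4$-cocircuit, every cocircuit of $M$ has at least six elements.

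If $|X|=4$, then $\lambda(X)=r(X)+r^*(X)-4=3$ forces $\{r(X),r^*(X)\}=\{3,4\}$; if $r^*(X)=3$ then $X$ contains a cocircuit of size at most four, a \cn, so $r(X)=3$. Now let $|X|\ge 5$. Since $X\subseteq\fcl(Y)$, we may order $X$ so that its first element $y$ lies in $\cl(Y)\cup\cl^*(Y)$; then $(Y\cup y,X-y)$ is $4$-separating. If $\lambda(Y\cup y)\le 2$, then $(Y\cup y,X-y)$ is a \ths\ with both parts of size at least four, a \ftv; so $(Y\cup y,X-y)$ is an exact $4$-separation, and $X-y\subseteq\fcl(Y)\subseteq\fcl(Y\cup y)$. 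By induction, $X-y$ is coindependent, $r(X-y)=3$, and $X-y\subseteq\cl(Y\cup y)$.

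If $y\in\cl(Y)$, then $\cl(Y\cup y)=\cl(Y)$, so $X=(X-y)\cup\{y\}\subseteq\cl(Y)$, as required. So assume $y\in\cl^*(Y)\setminus\cl(Y)$. Then $r(Y\cup y)=r(Y)+1$, and as $X-y\subseteq\cl(Y\cup y)$ and $(X-y)\cup(Y\cup y)=E(M)$ we obtain $\cl(Y\cup y)=E(M)$, so $r(M)=r(Y)+1$. Hence $\cl(Y)$ is a hyperplane and $C^*:=E(M)-\cl(Y)$ is a cocircuit of $M$ contained in $X$ with $y\in C^*$. From $\lambda(X)=3$ and $r(Y)=r(M)-1$ we get $r(X)=4$, so $r(C^*)\le 4$; and $r(C^*)\le 3$ is impossible, since then $|C^*|\le 7$ (a binary restriction of rank three has at most seven elements), hence $|C^*|=6$, and then $\lambda(C^*)=r(C^*)-1\le 2$, so $(C^*,E(M)-C^*)$ is a \ths\ with both parts of size at least four (using $|E(M)|\ge 14$), a \ftv. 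So $r(C^*)=4$, and hence $X\subseteq\cl(C^*)$. If $X\ne C^*$, then $|C^*|<|X|$ and $(C^*,\cl(Y))$ is an exact $4$-separation with $C^*\subseteq E(M)=\fcl(\cl(Y))$, so induction gives that $C^*$ is coindependent, contradicting that $C^*$ is a cocircuit. Therefore $X=C^*$: the set $X$ is a cocircuit of rank four, $\cl(Y)=Y$ is a closed hyperplane, and since $r(X-y)=3$ forces $|X-y|\le 7$, we have $|X|\in\{6,8\}$.

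The remaining task --- ruling out this last configuration --- is the main obstacle. Since $Y$ is closed but $\fcl(Y)=E(M)\ne Y$, the set $Y$ is not coclosed, so there is a cocircuit $D^*$ of $M$ with $D^*\cap X=\{e\}$ for some $e$ and $D^*-e\subseteq Y$. By orthogonality and the fact that $M$ is binary, every triangle meeting the cocircuit $X$ meets it in exactly two elements; hence the $\tfrac{3}{2}|X|$ triangles that meet $X$ form a $3$-regular graph on $X$, and their third elements all lie in $\cl(X)\cap Y$, a flat of rank three, hence of size at most seven. Applying the same argument to the cocircuit $D^*$ shows that the three triangles through $e$ have the form $\{e,a,b\}$ with $a\in X$ and $b\in D^*\cap Y$, where the three choices of $a$, and of $b$, are distinct (by a symmetric-difference argument). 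I would finish by combining these constraints with internal $4$-connectivity --- applied to the $4$-separations determined by $\cl(X)$ and by the planes spanned by pairs of the triangles through $e$ --- and with $|E(M)|\ge 14$, to reach a \cn. This is the step I expect to require the most care, since it is where one must exploit the rigidity of every element lying in exactly three triangles to show that a rank-four cocircuit of size six or eight cannot be a fully closed side of a $4$-separation.
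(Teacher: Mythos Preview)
Your inductive approach is sound and differs organisationally from the paper's direct argument (which tracks the alternating closure/coclosure sequence from $Y$), but both arrive at the same critical configuration: $X$ is a cocircuit of rank four with $|X|\in\{6,8\}$, $Y=\cl(Y)$ is a hyperplane, and there is a cocircuit $D^*$ of $M$ with $D^*\cap X=\{e\}$. Up to this point your argument is complete and correct.

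The genuine gap is the final step. You correctly isolate $D^*$, but then turn to the triangle structure on $X$ and leave the contradiction unfinished, explicitly flagging that this is ``the step I expect to require the most care''. In fact the finish is short and uses neither the triangle hypothesis nor internal $4$-connectivity. Since $X$ is a cocircuit in a binary matroid, every circuit contained in $X$ has $|C\cap X|=|C|$ even; thus $M|X$ has no odd circuits and is therefore a restriction of $AG(3,2)$. A six- or eight-element restriction of $AG(3,2)$ is a union of $4$-circuits: for $|X|=8$ this is immediate, and for $|X|=6$ the two fundamental $4$-circuits with respect to any basis of $M|X$ must overlap in exactly two elements (any other overlap forces a circuit of size at most two or a set of size exceeding six), so their union is all of $X$. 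In particular $e$ lies in a $4$-circuit $C\subseteq X$, and then
\[
|C\cap D^*| \;=\; |C\cap X\cap D^*| \;=\; |C\cap\{e\}| \;=\; 1,
\]
contradicting binary \ort. This is precisely how the paper dispatches the cocircuit case; your detour through the $3$-regular triangle graph on $X$ and the planes spanned by pairs of triangles is unnecessary.
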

  
\begin{proof}
If $X\subseteq \cl(Y)$, then $Y$ contains a basis of $M$, and $X$ is coindependent.  
As $r(X)+r^*(X)-|X|\leq 3$, the rank of $X$ is at most three, and the result holds.  
If $X\subseteq\cl ^*(Y)$, then $X$ is independent, so   
 $r^*(X)=3$.  
As $|X| \ge 4$, it follows that $X$ is a $4$-cocircuit; a contradiction.  

Beginning with $Y$, look at $\cl (Y),\cl ^*(\cl (Y)),\cl (\cl ^*(\cl(Y))),\dots$ until the first time we get $E(M)$.  
Consider the set $Y'$ that occurs before $E(M)$ in this sequence, let $X'=E(M)-Y'$, and let $e$ be the last element that was added in taking the closure or coclosure that equals $Y'$.  
Then either $Y'$ is a hyperplane and $X'$ is a cocircuit, or $Y'$ is a cohyperplane and $X'$ is a circuit.  

Suppose $X'$ is a circuit.  
As $r(X')+r^*(X')-|X'|\leq 3$, we see that $r^*(X')\leq 4$.  
Thus, as  $X'$ does not contain a $4$-cocircuit, it     
 is coindependent, so it has size at most four.  
We may assume that $X'\subsetneqq X$, otherwise the lemma holds.  
Suppose  $|X'|=4$. Then both $(X'\cup e,Y'-e)$ and $(X',Y')$ are exact $4$-separations.  
Thus $e\in\cl ^*(X')\cap\cl ^*(Y'-e)$ or $e\in\cl (X')\cap \cl (Y'-e)$.  
The latter holds otherwise $M$ has a $4$-cocircuit; a \cn.   
But $Y'$ is coclosed, so $e$ was added by coclosure; that is, $e\in\cl ^*(Y-e)$ and we have a \cn\ to \ort\ since $e\in\cl (X)$.  
It remains to consider the case when $|X'|=3$.  
Then $|X'\cup e|=4$.  
The lemma holds if $X'\cup e=X$, so there is an element $f$ of $Y'-e$ that was added immediately before $e$ in the construction of $Y'$.  Now if $f$ is added via closure, then we can also add $e$ and $X'$ via closure, so we violate our choice of $Y'$. Thus $f$ is added via coclosure so 
$f \in \cl^*(Y' - e - f) \cap \cl^*(X' \cup e)$. Hence $M$ has a $4$-cocircuit; a \cn. 

We may now assume  that $X'$ is a cocircuit.  Then  $X'$ has at least six elements.  
As $X'$ is $4$-separating, $3= r(X')+r^*(X')-|X'|=r(X')-1$.  
Hence $r(X')=4$, so $M|X'$ is a restriction of $PG(3,2)$.  
As $M$ is binary, $X'$ contains no triangle and no $5$-circuits, so $M|X'$  is a restriction of $AG(3,2)$. As $X'$ has six or eight elements, it follows that $X'$ is a union of $4$-circuits so $\fcl(Y')$ cannot contain $X'$; a \cn. 
  \end{proof}

\begin{lemma}
\label{seqok}
Assume $M$ has no $4$-cocircuits. If every exact $4$-separation in $M$ is sequential, then, for every element $e\in E(M)$, the matroid $\si(M/e)$ is \ifc\ with no triads.  
\end{lemma}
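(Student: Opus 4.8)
The plan is to fix an element $e \in E(M)$ and show that $M' = \si(M/e)$ is \ifc\ with no triads by assuming the contrary and extracting a \ftv\ or a triad that survives to give a $3$-separation of $M$ of the forbidden type. By Lemma~\ref{3conn}, $M'$ is \thc, so if $M'$ is not \ifc\ it has a \ftv\ $(U,V)$. First I would deal with \emph{non-sequential} \ftv s of $M'$: pull the separation back to $M/e$ by moving each element of $E(M/e) - E(M')$ to the side containing its parallel partner (as in the proof of Lemma~\ref{3conn}), obtaining an exact $3$-separation $(U', V')$ of $M/e$ with $|U'|,|V'| \ge 4$; then $(U' \cup e, V')$ and $(U', V' \cup e)$ are exact $4$-separations of $M$. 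Since $M$ has no $4$-cocircuits, the hypothesis says every exact $4$-separation of $M$ is sequential, so by Lemma~\ref{seqsep} one side, say $U' \cup e$, has rank $3$ and lies in $\cl(V')$, hence has at most four elements; but $|U' \cup e| \ge 5$ unless $|U'| = 4$, and when $|U'| = 4$ a rank-$3$ coindependent $5$-set whose closure meets $V'$ forces structure that I would push to a contradiction with $|E(M)| \ge 14$ (a rank-$3$ binary coindependent set of size $\le 4$ that is also $3$-separating in $M$ pins down a small flat, and the full-closure condition then traps too much of the matroid). The cleanest route is: a non-sequential \ftv\ of $M'$ gives a non-sequential exact $4$-separation of $M$, directly contradicting the hypothesis.

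Next I would handle \emph{sequential} \ftv s, which is where the triad-freeness of $M'$ is used. If $(U,V)$ is a sequential \ftv\ of $M'$ with, say, $\fcl(U) = E(M')$ and $|V| \ge 4$, then $V$ is a sequential $3$-separating set, and since $|V| \ge 4$ with $M'$ \thc\ one checks that $V$ contains a triad $\{\beta,\gamma,\delta\}$ of $M'$ (the end of the fan-ordering of $V$ on the $U$-side must be a triangle or triad; a triangle at that end can be absorbed back into $\fcl(U)$, leaving a triad). Then $\{\beta,\gamma,\delta\}$ is a triad of $M/e \ba (E(M/e) - E(M'))$, so $M \ba (E(M/e) - E(M'))$ has it as a cocircuit, and since that deletion only removes elements in parallel pairs of $M/e$, Lemma~\ref{noodd} applied to $M$ forces a cocircuit of $M$ of size $\le 3 + (\text{number of deleted parallel partners})$ containing $\{\beta,\gamma,\delta\}$; because $M$ has no $4$-cocircuits and all its cocircuits are even, this cocircuit has size $\ge 6$, hence picks up at least three deleted elements, i.e.\ at least three parallel pairs collapse onto $\{\beta,\gamma,\delta\}$ in $M/e$. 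That configuration, together with the three triangles through $e$, will be shown to force $\lambda$ of a small set to be at most $2$ in $M$, contradicting $|E(M)| \ge 14$ — this is the same contradiction mechanism used in Lemmas~\ref{no5} and~\ref{no6}.

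Finally, showing $M'$ has \emph{no triads} reduces to the same analysis: a triad $\{\beta,\gamma,\delta\}$ of $M'$ with $|E(M')| \ge 8$ would, unless it extends to a triad of $M'$ giving a $4$-fan (already excluded above), lift via Lemma~\ref{noodd} to an even cocircuit of $M$ of size $\ge 6$ meeting the deleted parallel classes, and the counting argument closes it out. So the overall skeleton is: (1) non-sequential \ftv\ $\Rightarrow$ non-sequential exact $4$-separation of $M$, contradicting the hypothesis; (2) sequential \ftv\ $\Rightarrow$ triad of $M'$ in the sequential side; (3) any triad of $M'$ lifts to a large even cocircuit of $M$ collapsing many parallel pairs, forcing $\lambda \le 2$ on a bounded set and contradicting $|E(M)| \ge 14$.

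The main obstacle I anticipate is step (3): carefully controlling how the triad $\{\beta,\gamma,\delta\}$ of $M'$ sits relative to the three triangles $T_1, T_2, T_3$ through $e$ and relative to the parallel pairs created in $M/e$. One must rule out that the even cocircuit of $M$ containing $\{\beta,\gamma,\delta\}$ is genuinely large and "spread out" rather than confined to a rank-$4$ set; the key leverage is that each deleted element of $M/e$ is parallel to some $g_i$-type element lying in a triangle with $e$, so orthogonality with the cocircuit and the binary constraint (no $F_7$-restriction through $e$, since $e$ is in exactly three triangles) pin the relevant elements into a rank-$\le 4$ flat, after which $r(Z) \le 4$ and $|Z| - r^*(Z) \ge 2$ give $\lambda(Z) \le 2$. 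Making the "rank-$\le 4$" claim airtight, rather than hand-waving from the pictures in Lemmas~\ref{no5} and~\ref{no6}, is the delicate part.
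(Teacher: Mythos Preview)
Your overall architecture---lift a \ftv\ of $M'=\si(M/e)$ to an exact $4$-separation of $M$, invoke the sequential hypothesis and Lemma~\ref{seqsep}---matches the paper. But two things go wrong in execution.

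\textbf{Step (1) is muddled.} The claim that $r_M(U'\cup e)=3$ forces ``$|U'\cup e|\le 4$'' is false; a rank-$3$ flat in a binary matroid can have seven points. The paper's contradiction is different and immediate: since $e\in U'\cup e$ and $e$ is being contracted, $r_{M/e}(U')\le 2$, hence $r_{M'}(U)\le 2$. But $M'$ is simple binary and $|U|\ge 4$; contradiction. This works for \emph{any} \ftv\ of $M'$, sequential or not, so your case split on sequentiality in $M'$ is unnecessary. (The paper does need a second case, when Lemma~\ref{seqsep} applied to both $(U'\cup e,V')$ and $(U',V'\cup e)$ bounds the rank of \emph{both} sides by $3$; then $r(M)=4$ and a $PG(3,2)$ count gives $|E(M)|\le 12$.)

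\textbf{Step (3) has a genuine gap.} Once you know the triad of $M'$ must be $\{g_1,g_2,g_3\}$ and that $C^*=\{f_1,f_2,f_3,g_1,g_2,g_3\}$ is a cocircuit of $M$, your plan is to exhibit a bounded set $Z$ with $\lambda_M(Z)\le 2$. For $Z=C^*\cup e$ one gets $r(Z)=4$ (the three triangles through $e$ are not coplanar, else $C^*$ would contain a triangle) and $Z$ contains only the single cocircuit $C^*$ (any cocircuit in $Z$ through $e$ would, by orthogonality with the $\{e,f_i,g_i\}$, have odd size), so $\lambda(Z)=r(Z)+r^*(Z)-|Z|=4+6-7=3$, not $2$. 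Enlarging $Z$ in the style of Lemmas~\ref{no5}--\ref{no6} does not obviously help here, since those lemmas conclude about $\si(M/x)$ for $x\in C^*$, not for $e$. The paper avoids this entirely: having established in the first paragraph that $M'$ has no \ftv, it observes that $f_1$ is in a triangle with exactly one of $f_2,g_2,f_3,g_3$ (by orthogonality with $C^*$), so $M'$ has a triangle meeting the triad $\{g_1,g_2,g_3\}$. That triangle together with the triad is a $4$-fan, hence a \ftv\ of $M'$---contradicting what was just proved. So the triad argument bootstraps off the \ftv\ argument rather than running an independent $\lambda\le 2$ count.
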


\begin{proof}
Let $\{e,f_i,g_i\}$ be a triangle for all $i\in\{1,2,3\}$.  
The matroid $M'=\si (M/e)=M/e\ba f_1,f_2,f_3$ is \thc\ by Lemma~\ref{3conn}.  
Let $(U,V)$ be a \ftv\ in $M'$.  Then $|U|, |V| \ge 4$. 
Add $f_i$ to the side of the \ths\ containing $g_i$ for all $i\in\{1,2,3\}$ to obtain $(U',V')$, a \ths\ in $M/e$.  
Neither $(U'\cup e,V')$ nor $(U',V'\cup e)$ is a \ths\ in $M$.  
Hence both are $4$-separations in $M$.  
Thus, by hypothesis, each is a sequential $4$-separation  in $M$.  
Lemma~\ref{seqsep} implies that, without loss of generality, either $U'\cup e$ is coindependent and has rank at most three in $M$; or both $U'$ and $V'$ have rank at most three and are contained in $\cl (V'\cup e)$ and $\cl (U'\cup e)$, respectively.  
In the first case, as $U'\cup e$ is contained in a plane, $U$ is contained in a triangle in $\text{si}(M/e)$; a \cn.  
In the second case, $r(M)=4$, so $U'$ and $V'$ span planes in $PG(3,2)$.  
These planes meet in a line, so $|U'\cup V'|\leq 7+7-3=11$.  
Hence $E(M)\leq 12$; a \cn.  

Suppose $M/e\ba f_1,f_2,f_3$ has a triad $\{a,b,c\}$.  
Then, by Lemma~\ref{no5}, $M$ has $\{a,b,c,f_1,f_2,f_3\}$ as a cocircuit, so we may assume that $(a,b,c)=(g_1,g_2,g_3)$.  
Now $M$ has a triangle containing $f_1$ and exactly one of $f_2,g_2,f_3$, or $g_3$.  
It follows that $\si (M/e)$ has a triangle meeting $\{g_1,g_2,g_3\}$, so $\si (M/e)$ has a $4$-fan; a \cn.  
\end{proof}

\begin{figure}
\includegraphics[scale=1]{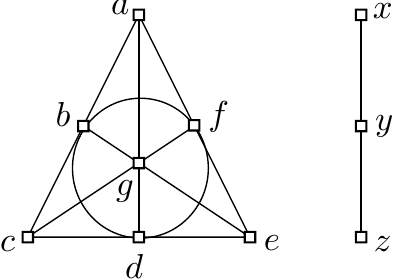}
\caption{A skew plane and line in a binary matroid.  Squares indicate positions that may be occupied by elements of $M$.}
\label{pl}
\end{figure}

The next three lemmas deal  with a plane and a line in $M$.  

\begin{lemma}
\label{baddie}
Suppose $M$ contains a plane $P$ and a line $L$ that are skew and are labelled as in Figure~\ref{pl} where not every element in the figure must be in $M$.  
If   $a,b,c,d,e,f,x,y$, and $z$ are  in $M$, and $\{x,y,a,b,d,e\}$ and $\{y,z,b,c,e,f\}$ are cocircuits in $M$, then $\si (M/w)$ is \ifc\ for all $w$ in 
$\{a,b,c,d,e,f\}$.  
\end{lemma}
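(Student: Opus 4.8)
The plan is to mimic the proofs of Lemmas~\ref{no5} and~\ref{no6}: first reduce to a single element using the symmetry of the configuration, then pass to $\si(M/w)$ and analyse a hypothetical \ftv.

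I would begin by exploiting symmetry. Since $M$ is binary and $P$ is a plane, $\{a,b,d,e\}$ and $\{b,c,e,f\}$ are the complements within $P$ of triangles of $M|P$; moreover $\{x,y,a,b,d,e\}\symdif\{y,z,b,c,e,f\}=\{x,z,a,c,d,f\}$ lies in the cocircuit space of $M$ and has six elements, so, as $M$ is \thc\ and has no cocircuit of odd size by Lemma~\ref{noodd}, it is itself a cocircuit of $M$. Thus $\{x,y,a,b,d,e\}$, $\{y,z,b,c,e,f\}$ and $\{x,z,a,c,d,f\}$ are all cocircuits, and a routine check shows that the group of permutations of $\{a,b,c,d,e,f,x,y,z\}$ fixing $P$, $L$, and this set of three cocircuits acts transitively on $\{a,b,c,d,e,f\}$. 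Hence it suffices to prove that $\si(M/a)$ is \ifc.

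Next I set up $M'=\si(M/a)$; by Lemma~\ref{3conn}, $M'$ is \thc. Contracting $a$ turns the triangles of $M$ through $a$ into parallel pairs, so $P$ collapses onto a triangle $T_P$ of $M'$. As $P$ and $L$ are skew, $L=\{x,y,z\}$ remains a triangle of $M'$ and is skew to $T_P$, whence $M'|(T_P\cup L)\cong U_{2,3}\oplus U_{2,3}$. The cocircuit $\{y,z,b,c,e,f\}$ of $M$ avoids $a$, hence is a cocircuit of $M/a$; its two parallel pairs collapse, so the representatives defining $M'$ may be chosen so that $\{b,c,y,z\}$ is a $4$-element cocircuit of $M'$. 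Now suppose $M'$ is not \ifc\ and take a \ftv\ $(S,T)$ of $M'$. Using that $T_P$ and $L$ are triangles and $\{b,c,y,z\}$ is a cocircuit, I move elements across $(S,T)$ with closure and coclosure so that, without loss of generality, $T_P\subseteq S$. Re-adding $a$ and the deleted parallel partners to $M$ — each partner on the side of the element to which it is parallel in $M/a$, and $a$ on the side containing $T_P$ — produces, by the standard behaviour of the connectivity function under contraction and parallel extension, a partition $(S',T')$ of $E(M)$ with $|S'|,|T'|\ge 4$ and $\lambda_M(S')\le 3$; and $\lambda_M(S')\le 2$, so that $(S',T')$ is a \ftv\ of $M$ and we have a \cn, \emph{unless} $(S,T)$ is an exact $3$-separation of $M'$ and $a\in\cl_M(T')$.

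In the remaining case $a\in\cl_M(T)$ while $T$ is disjoint from $P-a$, so $M$ has a \cir\ through $a$ meeting $P$ only in $a$; when $P\cong F_7$ this \cir\ has at least four elements, and in general it involves at most one triangle of $M$ through $a$ that leaves $P$. Here a further analysis is needed: examining whether $(S,T)$ is sequential in $M'$ and, if so, locating a $4$-fan and hence a triad $\{\beta,\gamma,\delta\}$ of $M'$, then using orthogonality of the corresponding cocircuit of $M'$ with $T_P$, $L$ and $\{b,c,y,z\}$ to confine $\{\beta,\gamma,\delta\}$ to $T_P\cup L$, so that, applying Lemma~\ref{noodd} to $M$, it extends to an even cocircuit $D^*$ of $M$; finally the orthogonality of $D^*$ with the lines of $P$, with $L$, and with the two given $6$-cocircuits pins $D^*$ down tightly enough that a set such as $D^*\cup\{x,y,a,b,d,e\}$ has connectivity at most $2$ in $M$, contradicting $|E(M)|\ge 14$. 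The main obstacle is precisely this last, sequential-violator case: one must carry out the orthogonality bookkeeping for each position the forced triad of $M'$ can occupy relative to $T_P$, $L$ and the contracted and deleted elements, and one must treat separately the subcase in which $P$ is a complete $F_7$-restriction and the subcase in which $P$ is $F_7$ with one point missing, since these differ in which elements of $M$ become parallel when $a$ is contracted.
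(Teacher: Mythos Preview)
Your strategy (symmetry reduction to $w=a$, lift a hypothetical \ftv\ of $M'$ back to $M$, then analyse a forced triad in the sequential case) is the same as the paper's, but you are missing the key structural step, and without it the argument does not close.

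The paper's first move is to use \ort\ with the three $6$-cocircuits to find six \emph{external} elements $a',b',c',d',e',f'$, none in $P\cup L$, with $\{x,a,a'\},\{x,d,d'\},\{y,b,b'\},\{y,e,e'\},\{z,c,c'\},\{z,f,f'\}$ all triangles of $M$.  Two consequences follow immediately.  First, since $\{a,x,a'\}$ is a triangle through $a$ outside $P$, the element $a$ lies in only two triangles inside $P$; so the seventh Fano point is absent and your $F_7$/non-$F_7$ case split is moot.  Second, and crucially, the three triangles through $a$ are $\{a,b,c\},\{a,e,f\},\{a,x,a'\}$, so $M'=M/a\ba a',b,f$, and inside $M'$ one obtains the nine-element set $Z=\{c,d,e,x,y,z,d',b',f'\}$ with $M'|Z\cong M^*(K_{3,3})$.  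It is this rich $3$-connected restriction, not merely the triangle $T_P$, the triangle $L$, and a single $4$-cocircuit, that lets the paper assert that one side of any \ftv\ of $M'$ spans $Z$ and hence also $c',e'$.  Your six-element set $T_P\cup L\cong U_{2,3}\oplus U_{2,3}$ together with one $4$-cocircuit is too loose: nothing prevents $T_P\subseteq S$ while $L\subseteq T$, and then your lifting gives only $\lambda_M(S')\le 3$ with $a\in\cl_M(T')$, a case you cannot eliminate.

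Your ``remaining case'' sketch likewise needs the external elements.  In the paper, once the non-sequential case is gone, a triad $\{\beta,\gamma,\delta\}$ of $M'$ lifts to a cocircuit of $M$ of the form $\{\beta,\gamma,\delta\}\cup t$ with $t\in\{a',b,f\}$ (or to $\{x,c,e,a',b,f\}$), and each possibility is killed by \ort\ with one of the external triangles such as $\{y,b,b'\}$ or $\{z,c,c'\}$.  Without having produced $a',\ldots,f'$ you have no such triangles to play against, and your claim that \ort\ ``confines $\{\beta,\gamma,\delta\}$ to $T_P\cup L$'' does not follow.  (Incidentally, your $4$-cocircuit should read $\{y,z,c,e\}$, not $\{b,c,y,z\}$: $b$ and $c$ are parallel in $M/a$, so at most one survives to $M'$.)

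In short, derive the six external triangles first; they determine the deletions in $\si(M/a)$, force $|P\cap E(M)|=6$, and give you the $M^*(K_{3,3})$-restriction that makes both the non-sequential and the triad analyses go through.
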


\begin{proof}
By symmetric difference, $\{x,z,a,c,d,f\}$ is a cocircuit.  
As $z$ is in three triangles of $M$, \ort\ implies that $z$ is in a triangle with $c$, say $\{z,c,c'\}$, and a triangle with $f$, say $\{z,f,f'\}$.  Likewise, 
$x$ is in triangles $\{x,a,a'\}$ and $\{x,d,d'\}$, while $y$ is in triangles $\{y,b,b'\}$ and $\{y,e,e'\}$, for some elements $a',d',b',e'$. As $P$ and $L$ are skew, all of $a',b',c',d',e',f'$ are distinct and none is in $P$ or $L$. 


By symmetry, it suffices to show that $\si(M/a)$ is \ifc. Let $M' = \si(M/a) = M/a\ba a',b,f$. Let $Z = \{c,d,e,x,y,z,d',b',f'\}$. The restriction of $M'$ to $Z$ is isomorphic to $M^*(K_{3,3})$. Suppose $(U,V)$ is a \ftv\ of $M'$. Without loss of generality, $U$ spans $Z$ in $M'$. 
Thus $U$ spans $\{c',e'\}$. Hence $(U\cup Z \cup \{c',e'\} \cup \{a',b,f\}, V - Z - \{c',e'\})$ is $3$-separating in $M/a$, so $(U\cup Z \cup \{c',e'\} \cup \{a',b,f\} \cup a, V - Z - \{c',e'\})$ is $3$-separating in $M$. Thus $V$ is a sequential $3$-separating set in $M'$, so $V$ contains a triad $\{\beta,\gamma,\delta\}$. Thus either $\{x,c,e, a',b,f\}$   or   $\{\beta,\gamma,\delta\} \cup t$ is a cocircuit of $M$ for some $t$ in $\{a',b,f\}$. The first possibility gives a \cn\ to \ort\ with $\{y,b,b'\}$. Thus $\{\beta,\gamma,\delta,b\}$, $\{\beta,\gamma,\delta,f\}$, or  $\{\beta,\gamma,\delta,a'\}$ is a cocircuit. Suppose $\{\beta,\gamma,\delta,b\}$ or $\{\beta,\gamma,\delta,f\}$ is a cocircuit. 
Then \ort\ implies that $\{\beta,\gamma,\delta\}$ contains $\{b,c,d\}$ or $\{f,e,d\}$ and so we get a \cn\ to \ort\ with  at least one of $\{x,d,d'\}, \{z,c,c'\}, \{z,f,f'\},  \{y,b,b'\}$ and $\{y,e,e'\}$. Thus $\{\beta,\gamma,\delta,a'\}$ is a cocircuit. This cocircuit also contains $x$ so either contains $y$ and elements from each of $\{b,b'\}$ and $\{e,e'\}$, or contains $z$ and elements from each of $\{f,f'\}$ and $\{c,c'\}$. Each case gives a \cn\ to \ort. We conclude that $\si(M/a)$ is \ifc, so the lemma holds.
\end{proof}

\begin{lemma}
\label{notfig}
Assume $M$ has no $4$-cocircuits. Let $(U,V)$ be a non-sequential $4$-separation of $M$ where $U$ is closed and $V$ is contained in the union of a plane $P$ and a line $L$ of $M$. Then  either $V$ is $6$-cocircuit, or $|V| = 9$ and $|P| = 6$. Moreover, 
$\si (M/v)$ is \ifc\ for at least six elements $v$ of $V$.  
\end{lemma}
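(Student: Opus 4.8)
The plan is to pin down $r(V)$, which governs everything. Since $U$ is closed and $U\neq E(M)$, we have $r(U)<r(M)$; the $4$-separation $(U,V)$ is exact (a $3$-separation with both sides larger than three is impossible in the \ifc\ matroid $M$), so $r(V)+r(U)=\lambda_M(V)+r(M)=r(M)+3$, forcing $r(V)\geq 4$. On the other hand $V\subseteq P\cup L$ gives $r(V)\leq r(P)+r(L)=5$. Hence $r(V)\in\{4,5\}$, and these will correspond exactly to the two conclusions ($6$-cocircuit, resp.\ $|V|=9$ and $|P|=6$). Two facts will be used repeatedly. First, by Lemma~\ref{noodd} and the hypothesis on $4$-cocircuits, every cocircuit of $M$ has even cardinality at least six; and $V$ contains a cocircuit if and only if $r^*(V)<|V|$, i.e.\ if and only if $r(V)\geq 4$, which always holds here. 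Second, $M$ has no $7$-element plane: such a rank-$3$ flat together with its complement would be a \ths\ with both sides of size more than three, a \cn; hence $|P|\leq 6$.

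Suppose first that $r(V)=4$. Then $r^*(V)=|V|-1$, so $V$ contains a \emph{unique} cocircuit $C^*$; since $r(C^*)=4$ and $|P|\leq 6$, a short count shows $|C^*|=6$. By \ort\ no triangle lies inside a cocircuit, so no $3$-element line contained in $P$ or in $L$ is a subset of $C^*$. Running through the possibilities for $P\cap L$ and for $C^*\cap P$, $C^*\cap L$ under this restriction, I would find that in every case either a $3$-element line of $P$ or of $L$ is forced inside $C^*$, or every $3$-element line of $P$ is forced through one common point of $P$ (impossible for a plane with more than one $3$-element line). What survives is: $P$ and $L$ meet in a single point $\ell$, with $|L|=3$, $|P|=5$, and $\ell$ the apex of $P$ (the point lying on both $3$-element lines of $P$); then $C^*=(P\cup L)-\ell$, and a further short argument, again via non-sequentiality and $|P|\le 6$, rules out the remaining extra element and gives $V=C^*$. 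Finally, the three triangles through $\ell$ are precisely $L$ and the two $3$-element lines of $P$, so $C^*=(T_1\cup T_2\cup T_3)-\ell$, and Lemma~\ref{no6} shows $\si(M/v)$ is \ifc\ for every $v\in C^*=V$; in particular for at least six elements, and $V$ is a $6$-cocircuit.

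Now suppose $r(V)=5$. Then $V$ spans $P\cup L$, which forces $P$ and $L$ to be skew (else $r(P\cup L)\leq 4$), and $r^*(V)=|V|-2$, so $M^*|V$ has nullity two and $V$ contains several cocircuits. The plan is to reconstruct the configuration of Figure~\ref{pl}. Using \ort\ between the cocircuits contained in $V$ and the three triangles through each element of $V$, and using that each element of $P$ and of $L$ must draw all three of its triangles from within the skew plane-and-line picture, I would deduce in turn that $|L|=3$, that $|P|=6$, that $V=P\cup L$ (so $|V|=9$), and that $M$ has two $6$-cocircuits, each meeting $L$ in two elements and $P$ in four, whose intersection pattern is exactly the one displayed in Figure~\ref{pl}. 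Lemma~\ref{baddie} then applies verbatim and gives that $\si(M/w)$ is \ifc\ for all six elements $w$ of $P$, completing the proof.

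The main obstacle is the reconstruction in the case $r(V)=5$: one must show, from only the abstract data of a non-sequential exact $4$-separation $(U,V)$ with $U$ closed and $V\subseteq P\cup L$, that the picture is forced to be the rigid one of Figure~\ref{pl} --- in particular that $|P|=6$, that $V$ fills out all of $P\cup L$, and that the two distinguished $6$-cocircuits are present, so that the earlier lemma can be invoked. This is a careful and somewhat lengthy case analysis driven throughout by \ort\ with the triangles. By comparison, the reduction of $r(V)$ to $\{4,5\}$ is immediate from $U$ being closed, and the $r(V)=4$ case is a shorter instance of the same type of argument.
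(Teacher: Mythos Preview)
Your outline matches the paper's: reduce to $r(V)\in\{4,5\}$, then invoke Lemma~\ref{no6} in the first case and Lemma~\ref{baddie} in the second. Two steps need repair, however.

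First, your claim that a $7$-element plane would force a \ths\ is false: for a rank-$3$ flat $F$ one has $\lambda(F)=3+r(E-F)-r(M)$, and nothing prevents $E-F$ from spanning $M$. The paper never asserts $|P|\le 6$ globally. In the $r(V)=5$ case it argues instead that each of the cocircuits $C^*$, $D^*$, $C^*\symdif D^*$ contained in $V$ meets $\cl_Q(P)$ in at most four points (else it would contain a line) and $\cl_Q(L)$ in at most two, so each has \emph{exactly} four in $P$ and two in $L$; a symmetric-difference count then gives $|C^*\symdif D^*|=6$, hence $|C^*\cap D^*|=3$ and $|V|=9$. That is essentially the whole $r(V)=5$ argument --- much shorter than you anticipate, and it does not need your $7$-point-plane claim.

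Second, and more seriously, in the $r(V)=4$ case your analysis concludes that $P$ and $L$ must meet in a point $\ell\in E(M)$. But working in $Q=PG(3,2)$, the subspaces $\cl_Q(P)$ and $\cl_Q(L)$ meet in a point of $Q$ that need not lie in $E(M)$ at all, and this is precisely the subcase you are missing. The paper treats it separately: when that intersection point is absent from $M$, one uses \ort\ with the $6$-cocircuit $V$ to pin down all three triangles through each of the two line-elements $x,y$; the remaining triangles through a particular plane-element are then forced to lie entirely inside the plane, which pushes some other plane-element into four triangles --- a \cn. Your sketch (``a $3$-element line forced inside $C^*$, or every line of $P$ through one common point'') is an \ort-only argument and does not produce this contradiction; without it the reduction to Lemma~\ref{no6} is incomplete.
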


\begin{proof}
By Lemma~\ref{noodd}, each cocircuit contained in $V$ has  exactly six elements otherwise it contains a triangle. 
Suppose $r(V)=3$.  
As $r(V)+r^*(V)-|V|=3$, we know that $V$ is coindependent.  
Hence it is contained in $\cl (U)$; a \cn.  
Evidently $r(V)\geq 4$.  
We use Figure~\ref{pl} as a guide for the points that may exist in $V$.  
We consider which positions are filled, keeping in mind that $V$ is the union of circuits and the union of cocircuits.  

Suppose $V$ has rank four and view $V$ as a restriction of $Q=PG(3,2)$.  
Then $\cl _Q(P)\cap \cl _Q(L)$ is a point of $Q$, so we may suppose $e=z$.  
Furthermore, as $r(V)+r^*(V)-|V|=3$, we know that $V$ contains, and therefore is, a cocircuit.  
Thus $|V|=6$.  
As $V$ contains no triangles, $|(P\cup L)\cap \cl _Q(P)|\leq 4$, and $|(P\cup L)\cap \cl _Q(L)|\leq 2$.  
Thus $e\notin P\cup L$.  
Without loss of generality, the points in $V$ are $a,b,f,g,x$, and $y$, and the result follows by Lemma~\ref{no6} provided $e\in E(M)$.  

We assume therefore that $e\notin E(M)$.  
We know that $V=\{x,y,a,b,f,g\}$.  
By \ort, without loss of generality, the three triangles of $M$ containing $x$ are $\{x,a,a'\},\{x,f,f'\}$, and $\{x,b,b'\}$.  
Thus $M$ has as triangles each of $\{y,a',f\},\{y,a,f'\}$, and $\{y,b',g\}$.  
Hence $M$ has no other triangles containing $x$ or $y$.  
Thus the remaining triangles containing $g$ must be in $P$, and so contain $c$ and $d$.  
But then $\{a,b,c\}$ and $\{a,g,d\}$ are triangles of $M$, so $a$ is in four triangles; a \cn.

Suppose that $r(V)=5$.  
Then $P$ and $L$ are skew, and $V$ is the union of two $6$-cocircuits, $C^*$ and $D^*$.   By \ort, each of $C^*$ and $D^*$ contains at most four elements of  $P$. Thus, by \ort, $|P| \le 6$ so $|C^* \cup D^*| \le 9$.  Hence $|C^* \btu D^*| = 6$ and $|V| = 9$. Then, without loss of generality, each of $C^*$ and $D^*$ meets $P$ in  four elements and $L$ in two elements. The result now follows    by Lemma~\ref{baddie}.  
  \end{proof}

\begin{lemma}
\label{nospike}
If $M$ has a $6$-element cocircuit $C^*=\{a,b,c,d,e,f\}$ where $\{a,b,c,d\}$ and $\{a,b,e,f\}$ are circuits, then   $\si (M/x)$ is \ifc\ for all $x$ in $C^*$.  
\end{lemma}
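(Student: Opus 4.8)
The plan is to reduce the statement to Lemma~\ref{no6}. I shall produce an element $g$ of $E(M)\ba C^*$ whose three triangles are exactly $\{a,b,g\}$, $\{c,d,g\}$, and $\{e,f,g\}$. Granting this, $C^* = (\{a,b,g\}\cup\{c,d,g\}\cup\{e,f,g\})-g$, so Lemma~\ref{no6}, applied with $g$ in the role of its distinguished element, immediately gives that $\si(M/x)$ is \ifc\ for every $x$ in $C^*$.

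To produce $g$, first observe that $\{c,d,e,f\}=\{a,b,c,d\}\symdif\{a,b,e,f\}$ is a disjoint union of circuits; since $M$ is $3$-connected with $|E(M)|\ge 14$ it has no $1$- or $2$-element circuits, so $\{c,d,e,f\}$ is a single $4$-circuit. Hence the three $4$-circuits contained in $C^*$ are precisely the pairwise unions of the pairs $P_1=\{a,b\}$, $P_2=\{c,d\}$, $P_3=\{e,f\}$, and any permutation of $P_1,P_2,P_3$ preserves the hypotheses. Now form a graph $G$ on vertex set $C^*$ in which $uv$ is an edge exactly when some triangle of $M$ contains both $u$ and $v$. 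By \ort\ and the fact that $M$ is binary, each triangle that meets $C^*$ does so in exactly two elements, and two distinct triangles through a fixed element of $C^*$ cannot meet $C^*$ in the same pair, as that would force a $2$-circuit; thus $G$ is simple and $3$-regular, so $|E(G)|=9$, and the third element of any triangle meeting $C^*$ lies outside $C^*$.

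Let $\phi$ be the order-$2$ permutation $(ab)(cd)(ef)$ of $C^*$. If $T$ is a triangle with $T\cap C^*=\{u,v\}$ and $\{u,v\}$ is none of $P_1,P_2,P_3$, then $u$ and $v$ lie in different $P_i$, so $\{u,v\}$ is contained in one of the three $4$-circuits, say $C_j$, and $C_j\ba\{u,v\}=\{\phi(u),\phi(v)\}$; taking $T\symdif C_j$ shows $\{w,\phi(u),\phi(v)\}$ is a triangle, where $w$ is the third element of $T$. Hence $\phi$ is an automorphism of $G$. Since $\phi$ has order $2$, the number of edges of $G$ fixed setwise by $\phi$ is congruent to $|E(G)|=9$ modulo $2$, hence is odd; as the only $\phi$-fixed $2$-subsets of $C^*$ are $P_1,P_2,P_3$, at least one $P_i$ is an edge of $G$. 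By the symmetry among the pairs we may take $P_1=\{a,b\}$ to be an edge, witnessed by a triangle $\{a,b,g\}$ with $g\notin C^*$. Forming the symmetric differences of $\{a,b,g\}$ with $\{a,b,c,d\}$ and with $\{a,b,e,f\}$ shows $\{c,d,g\}$ and $\{e,f,g\}$ are triangles as well; being three distinct triangles through $g$, they are all of them, so $g$ is the desired element and Lemma~\ref{no6} completes the proof.

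I do not expect a serious obstacle. The crux is the parity observation that forces the tip $g$ to exist --- combining the automorphism $\phi$ with $|E(G)|=9$ --- and the rest is routine \ort\ bookkeeping (checking that $G$ is simple and $3$-regular, and that the third vertices of the relevant triangles, in particular $g$, lie outside $C^*$) together with the appeal to Lemma~\ref{no6}. Should the configuration fail to reduce cleanly in this way, the fallback would be the standard argument used in Lemmas~\ref{no5}, \ref{no6}, and~\ref{baddie}: take a $(4,3)$-violator $(U,V)$ of $\si(M/x)$, move deleted elements back across it to force $V$ to be sequential, extract a triad, and derive a low-connectivity set contradicting $|E(M)|\ge 14$ --- but the reduction above should make that unnecessary.
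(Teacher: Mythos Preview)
Your proof is correct and follows the same overall strategy as the paper: reduce to Lemma~\ref{no6} by exhibiting a tip $g$ with triangles $\{a,b,g\},\{c,d,g\},\{e,f,g\}$. The only difference is in how you show that at least one of the pairs $\{a,b\},\{c,d\},\{e,f\}$ lies in a triangle. The paper argues directly: assuming no pair does, it uses the symmetric-difference trick to force each of $a,b,c,d$ into two triangles meeting $\{a,b,c,d\}$ and hence exactly one meeting $\{e,f\}$, and then counts that $e$ and $f$ would together need six such triangles against only four available. Your parity argument---build the $3$-regular graph $G$ on $C^*$, observe that $\phi=(ab)(cd)(ef)$ is an automorphism because symmetric difference with the relevant $4$-circuit carries one triangle to another, and conclude that the nine edges of $G$ must include an odd (hence nonzero) number of $\phi$-fixed edges, which are exactly the $P_i$---is a cleaner way to reach the same conclusion and makes the role of the involution explicit. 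Both arguments are short; yours generalises more readily to similar ``spike-like'' cocircuit configurations, while the paper's count is slightly more hands-on but requires no auxiliary graph.
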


\begin{proof}
By symmetric difference, $\{c,d,e,f\}$ is also a circuit. Thus $C^*$ is the union of three disjoint pairs, $\{a,b\}$, $\{c,d\}$, and $\{e,f\}$ such that the union of any two of these pairs is a circuit. If one of these pairs is in a triangle with some element $x$, then each of the pairs is in a triangle with $x$ and the lemma follows by Lemma~\ref{no6}. Thus we may assume that each of $\{a,c\}$ and $\{a,d\}$ is in a triangle. Hence so are $\{b,c\}$ and $\{b,d\}$. Thus each of $a, b, c$ and $d$ is in exactly one triangle with an element of $\{e,f\}$. Hence $e$ and $f$ cannot both be in exactly three triangles; a \cn.
 \end{proof}
 
 \begin{lemma}
 \label{sixy}
 Let $(J,K)$ be an exact $4$-separation of $M$ such that $J$ is closed. If $|K| \le 6$, then $K$ is a  $6$-cocircuit and $\si(M/k)$ is \ifc\ for all $k$ in $K$. 
 \end{lemma}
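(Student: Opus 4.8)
The plan is to prove the lemma in two stages: first, that $K$ must be a $6$-cocircuit (with $J$ a hyperplane), and then that the restriction $M|K$ necessarily has the structure required by Lemma~\ref{nospike}, which delivers the statement about the $\si(M/k)$.

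For the first stage I would argue with the connectivity function. Since $(J,K)$ is a $4$-separation, $J \neq E(M)$, so the closed set $J$ satisfies $r(J) \le r(M)-1$; hence, from $\lambda_M(K) = r(K)+r(J)-r(M) = 3$, we get $r(K) \ge 4$, with equality exactly when $J$ is a hyperplane, equivalently when $K$ is a cocircuit. If $K$ were coindependent then $r^*(K) = |K|$ and $\lambda_M(K) = r(K) \ge 4$, a \cn; so $K$ is codependent and contains a cocircuit $C^*$. As $M$ is \thc\ it has no cocircuit of size at most $2$, and by Lemma~\ref{noodd} every cocircuit of $M$ is even, so $4 \le |C^*| \le |K| \le 6$ leaves only $|C^*| \in \{4,6\}$. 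If $|C^*| = 6$ then $C^* = K$, so $K$ is a $6$-cocircuit; then $r^*(K) = 5$ and $\lambda_M(K) = r(K)-1 = 3$ give $r(K) = 4$ and $J$ a hyperplane, exactly as wanted. So everything comes down to excluding the case $|C^*| = 4$.

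Excluding a $4$-element cocircuit is the step I expect to be the main obstacle. If $\{a,b,c,d\}$ were a $4$-cocircuit, then \ort\ and binariness force each of its elements into three triangles, each meeting $\{a,b,c,d\}$ in exactly two elements, and (since a repeated pair of ``partners'' would put four points on a line) the triangles are $\{a,b,p\},\{a,c,q\},\{a,d,r\},\{b,c,s\},\{b,d,t\},\{c,d,u\}$ with $p,q,r,s,t,u$ distinct and outside $\{a,b,c,d\}$; taking symmetric differences of pairs of these triangles with the $4$-circuits they generate forces $\{p,q,s\},\{p,r,t\},\{q,r,u\}$, and $\{s,t,u\}$ to be triangles too, so that $M$ restricted to $Z = \{a,b,c,d,p,q,r,s,t,u\}$ is $M(K_5)$, $r(Z) = 4$, and every triangle of $M$ meeting $Z$ lies in $Z$. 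One then has to convert this rank-$4$, $10$-element restriction, together with the cocircuit $\{a,b,c,d\} \subseteq Z$, into either a \cn\ to internal $4$-connectivity (via a low-order separation, using $|E(M)| \ge 14$) or a \cn\ to orthogonality; if the argument is run under the standing assumption (as in Lemmas~\ref{seqsep}, \ref{seqok}, and~\ref{notfig}) that $M$ has no $4$-cocircuit, the exclusion is of course immediate. Granting this, $C^* = K$, so $K$ is a $6$-cocircuit with $r(K) = 4$ and $J$ is a hyperplane.

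For the second stage I would analyse $M|K$. Because $K$ is a cocircuit of the binary matroid $M$, orthogonality forces every circuit contained in $K$ to have even cardinality, hence cardinality at least $4$ since $M$ is simple. Since $r(K) = 4$ and $|K| = 6$, the cycle space of $M|K$ has dimension $2$, so it has exactly three nonzero members $C_1$, $C_2$, and $C_3 = C_1 \btu C_2$; each is a disjoint union of circuits of $M$ lying in $K$, and as such a union has at most $|K| = 6$ elements while each circuit in it has at least $4$, each $C_i$ is in fact a single circuit of size at least $4$. Every element of $C_1 \cup C_2$ lies in exactly two of $C_1,C_2,C_3$ (over $\mathbb{F}_2$), so $|C_1|+|C_2|+|C_3| = 2|C_1 \cup C_2| \le 12$; since the left side is at least $12$, we get $|C_1| = |C_2| = |C_3| = 4$, $C_1 \cup C_2 = K$, and $|C_1 \cap C_2| = 2$. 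Setting $\{a,b\} = C_1 \cap C_2$, $\{c,d\} = C_1 \setminus C_2$, and $\{e,f\} = C_2 \setminus C_1$, we have that $K = \{a,b,c,d,e,f\}$ is a $6$-cocircuit with $\{a,b,c,d\}$ and $\{a,b,e,f\}$ circuits of $M$, so Lemma~\ref{nospike} applies and gives that $\si(M/k)$ is \ifc\ for every $k \in K$, completing the proof.
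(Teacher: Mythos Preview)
Your proof is correct and follows essentially the same two-stage route as the paper: first force $K$ to be a $6$-cocircuit, then analyse the circuits inside $K$ and quote an earlier lemma. Two small points of comparison are worth making.

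On the first stage, the paper's proof dispatches $|K|=4$ with a bare ``a contradiction'' after observing that $K$ would then be a $4$-cocircuit, and excludes $|K|=5$ by saying $K$ is a union of even cocircuits. Both steps only work under the hypothesis that $M$ has no $4$-cocircuits (explicit in Lemmas~\ref{seqsep}, \ref{seqok}, \ref{notfig}, and present in Lemma~\ref{no4}, the only place Lemma~\ref{sixy} is used). So your hedged reading is exactly right: the exclusion of $|C^*|=4$ is via that standing assumption, and your $M(K_5)$ digression, while structurally accurate, neither completes to a contradiction on its own nor is needed.

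On the second stage, your cycle-space count is a touch more explicit than the paper's, and you finish with Lemma~\ref{nospike} rather than the paper's Lemma~\ref{notfig}. Your choice is actually the cleaner one: Lemma~\ref{nospike} applies verbatim to a $6$-cocircuit containing two $4$-circuits meeting in two elements, whereas Lemma~\ref{notfig} carries a ``non-sequential'' hypothesis that is not part of the statement of Lemma~\ref{sixy}.
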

 
 \begin{proof}
 We have $r(K) + r^*(K)  - |K| = 3$ and $|K| \ge 4$. If $|K| = 4$, then $K$ is a cocircuit; a \cn. Thus $|K| \ge 5$. Since $K$ is a union of cocircuits each of which has even cardinality, it follows that $|K| \ge 6$.  Hence $K$ is a $6$-cocircuit. Thus $r(K) = 4$ so $K$ contains two circuits such that they and their symmetric difference have even cardinality. Hence $K$ is the union of two 4-circuits that meet in exactly two elements and the result follows by Lemma~\ref{notfig}.
 \end{proof}

\section{The proof of the main result}

The next lemma essentially completes the proof of Theorem~\ref{bigun}. 

\begin{lemma}
\label{no4}
Let $M$ be an \ifc\ binary matroid in which every element is in exactly three triangles. Suppose $M$ has no $4$-cocircuits. Then $M$ has at least six   elements $e$ such that $\si(M/e)$ is \ifc.
\end{lemma}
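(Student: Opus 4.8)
\textbf{Proof plan for Lemma~\ref{no4}.}

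The plan is to reduce everything to the structural results of Section~4 about $4$-separations. Since $M$ is \ifc\ with no $4$-cocircuits, the only $4$-separations we need to worry about are exact ones; by Lemma~\ref{seqok}, if \emph{every} exact $4$-separation of $M$ is sequential, then $\si(M/e)$ is \ifc\ for \emph{every} element $e$ of $M$ (and since $|E(M)| \ge 14$ by the standing hypothesis of Section~4, there are certainly at least six such elements). So we may assume $M$ has a \ns\ exact $4$-separation $(U,V)$. Replacing $(U,V)$ by $(\fcl(U), V - \fcl(U))$ and possibly swapping sides, we may take $U$ to be fully closed (in particular closed) and $V$ to be \ns, so that neither $\fcl(U)$ nor $\fcl(V)$ is $E(M)$; in particular $|V| \ge 4$ and $V$ is not sequential.

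The next step is to drive $|V|$ down to the point where Section~4 applies. Among all \ns\ exact $4$-separations $(U,V)$ with $U$ closed, choose one with $|V|$ minimum. First I would argue that $V$ is closed: if some $v \in U$ lies in $\cl(V)$, then $(U - v, V \cup v)$ is still a $4$-separation with $U - v$ possibly not closed, but re-closing the first side cannot reabsorb $v$ (as $v \in \cl(V)$), and one checks the new separation is still \ns, contradicting minimality of $|V|$ unless no such $v$ exists; so $V$ is closed, hence (being $4$-separating with $U$ also closed) $r(V) = r^*(V) + 3 - \ldots$, forcing low rank. Concretely, $r(V) + r^*(V) - |V| = 3$. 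Now I want to show $|V|$ is small: the key point is that since $(U,V)$ is \ns, $V \not\subseteq \fcl(U)$ and $U \not\subseteq \fcl(V)$, and one uses the guts/coguts structure together with the fact that $M$ is binary with every element in exactly three triangles. I expect one can show that a minimal such $V$ has $r(V) \le 5$: if $r(V) \ge 6$, one finds inside $V$ a proper $4$-separating subset whose complement-side closure is still proper, contradicting minimality. With $r(V) \le 5$, $V$ is the union of the small number of cocircuits it contains (each even, hence of size $\ge 4$; since $V$ has no triangle-size cocircuit and $M$ is binary, in fact size $\ge 4$), and $V$ lies in the union of a plane and a line of $M$ — this is exactly the configuration fed into Lemma~\ref{notfig} (via Figure~\ref{pl}).

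Once $V$ is positioned inside a plane $P$ and a line $L$ with $U$ closed, Lemma~\ref{notfig} applies directly and yields that $\si(M/v)$ is \ifc\ for at least six elements $v$ of $V$, which is exactly the conclusion. The cases to handle explicitly are: $r(V) = 3$ (impossible, as in Lemma~\ref{notfig}, since then $V$ is coindependent hence in $\cl(U)$, contradicting $V$ closed and $(U,V)$ \ns); $r(V) = 4$, where $V$ is a single $6$-cocircuit and we invoke Lemma~\ref{no6} (when the apex element $e$ of Figure~\ref{pl} is in $M$) or the direct contradiction of Lemma~\ref{notfig} (when it is not, using that every element is in exactly three triangles); and $r(V) = 5$, where $P$ and $L$ are skew, $V$ is the union of two $6$-cocircuits meeting the plane in four elements each, and Lemma~\ref{baddie} gives the six good elements. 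One also needs the sporadic cases where $K = V$ is already forced to be a $6$-cocircuit — here Lemma~\ref{sixy} and Lemma~\ref{nospike} cover the remaining circuit-distribution possibilities inside a $6$-cocircuit.

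\textbf{Main obstacle.} The hard part will be the reduction step: showing that a \ns\ exact $4$-separation $(U,V)$ with $U$ closed can be chosen so that $r(V) \le 5$ and so that $V$ actually lies in the union of a plane and a line, rather than merely knowing $r(V) \le 5$. One must carefully track what happens when one moves elements across the partition to close up $V$ or to shrink it, and verify that the separation stays both exact and \ns\ at each step — the non-sequentiality is the delicate invariant, since closing or coclosing a side can turn a \ns\ separation into a sequential one. The combinatorial input that saves us is that $M$ is binary with no $4$-cocircuits and every element in exactly three triangles, which via orthogonality forces any cocircuit inside $V$ to have size exactly $6$ and severely constrains how cocircuits and circuits can sit inside a rank-$4$ or rank-$5$ flat; this is what collapses the a priori unbounded $|V|$ down to the short list of configurations already dispatched in Section~4.
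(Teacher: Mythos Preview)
Your overall plan has the right shape up to the point where Lemma~\ref{seqok} reduces to the existence of a non-sequential exact $4$-separation, and you correctly identify Lemma~\ref{notfig} as the endgame. But the reduction step you flag as the ``main obstacle'' is a genuine gap, and your proposed attack on it does not work.

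You want to show that a minimal non-sequential $4$-separating side $V$ has $r(V)\le 5$ and lies in the union of a plane and a line. There is no reason this should be true: a non-sequential $4$-separation can have both sides large and of high rank, and minimality of $|V|$ alone does not produce a proper $4$-separating subset of $V$. Your sentence ``if $r(V)\ge 6$, one finds inside $V$ a proper $4$-separating subset whose complement-side closure is still proper'' is exactly the step that has no justification.

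The paper's proof supplies the missing idea, and it is not a minor patch. One assumes the lemma fails, takes a non-sequential $4$-separation $(X,Y)$ with $X$ minimal and $Y$ fully closed, and notes via Lemma~\ref{sixy} that $|X|\ge 7$; hence $X$ must contain an element $\alpha$ with $\si(M/\alpha)$ not \ifc. The $(4,3)$-violator of $\si(M/\alpha)$ is then lifted to a \emph{second} non-sequential $4$-separation $(U,V)$ of $M$ with $\alpha$ in the guts. It is the interaction of the two crossing $4$-separations, via submodularity $\lambda(X\cap U)+\lambda(X\cup U)\le 6$, that forces $r(X\cap U)\le 3$ and $r(X\cap V)\le 3$, placing $X$ inside two planes. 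Even then, the argument is far from over: one must rule out $r(X)\in\{4,6\}$, pin down $r(X)=5$, and run a delicate bipartite degree-count on ``special'' triangles to reach a contradiction. Your proposal contains none of this machinery; the single-separation minimality argument you sketch cannot substitute for it.
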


\begin{proof} 
By Lemma~\ref{smalltime}, we know that $|E(M)| \ge 14$. Assume that the lemma fails. By Lemma~\ref{seqok}, $M$ has a non-sequential $4$-separation $(X,Y)$ where $X$ is minimal. Then $Y$ is fully closed. By Lemma~\ref{sixy},  $|X| \ge 7$ and $X$ contains an element $\alpha$ such that $\si(M/ \al)$ is not \ifc.  Let $\{\al ,f_i,g_i\}$ be a triangle for all $i\in\{1,2,3\}$.  
Now $M'=\si (M/\al)=M/\al \ba f_1,f_2,f_3$ is not \ifc.  
By Lemma~\ref{3conn}, it is \thc.  
Take a \ftv\ $(U',V')$ in $M'$. Then $|U'|, |V'| \ge 4$. Hence $r_{M/{\alpha}}(U')$ and $r_{M/{\alpha}}(V')$ exceed two. 
Add $f_i$ to the side containing $g_i$ for all $i\in\{1,2,3\}$ to obtain $(U'',V'')$.  
Then both $(U''\cup \al ,V'')$ and $(U'',V''\cup \al)$ are exact $4$-separations of $M$.  
Since $\alpha \in \cl(U'')$ and $\alpha \in \cl(V'')$, we deduce that $r_M(U'') \ge 4$ and $r_M(V'') \ge 4$. 
Moreover, by Lemma~\ref{seqsep}, both $(U''\cup \al ,V'')$ and $(U'',V''\cup \al)$ 
are non-sequential. 
Without loss of generality, we may assume that $r(U''\cap X)\geq r(V''\cap X)$ and, when equality holds, $|U''\cap X|\geq |V''\cap X|$. 
Let $(U,V) = (\cl(U''), V'' - \cl(U''))$. Then 

\begin{sublemma}
\label{bigger}
$r_M(U\cap X)\geq r_M(V\cap X)$, and, when equality holds, $|U\cap X|> |V\cap X|$.  
\end{sublemma}

We show next that 
\begin{sublemma}
\label{nonempty}
$X\cap U,X\cap V,Y\cap U$, and $Y\cap V$ are all non-empty.  
\end{sublemma}
As $\al\in X\cap U$, the first set is not empty.  
If the second is empty, then, as $\al$ is in the closure of $V=V\cap Y$, we can move $\al$ to $Y$ to get $(X-\al,Y\cup \al)$ as a \ns\ $4$-separation of $M$; a \cn\ to our choice of $(X,Y)$.  
If the third is empty, then $U=X\cap U$, and $(X\cap U,Y\cup V)$ contradicts   our choice of $(X,Y)$.  
Likewise, if the fourth set is empty, then $V=X\cap V$, and $(X\cap V,Y\cup U)$ violates our choice of $(X,Y)$.  
This completes our proof of~\ref{nonempty}.  

By submodularity of the connectivity function, $\lambda _M(X\cup U)+\lambda _M(X\cap U)\leq \lambda _M(X)+\lambda _M(U)=3+3$.  
We now break the rest of the argument into the following two cases, which we shall then consecutively eliminate.  
\begin{enumerate}
\item[(A)] $\lambda (X\cap U)\geq 4$ and $\lambda (X\cup U)=\lambda (Y\cap V)\leq 2$; or 
\item[(B)] $\lambda (X\cap U)\leq 3$.  
\end{enumerate}

\begin{sublemma}
\label{nope}
 (A) does not hold.  
\end{sublemma}

Suppose that (A) holds. As $M$ is \ifc, $Y\cap V$ is a triangle, or a triad, or contains at most two elements.  
Clearly, this set is not a triad.  
Suppose $\lambda (X\cap V)\geq 4$. Then, by submodularity again, $\lambda (Y\cap U)\leq 2$, so $|Y\cap U|\leq 3$.  
Then $|Y|\leq 6$, so $Y$ contains and so is a cocircuit. As this cocircuit cannot contain a triangle, it follows that $|Y \cap V| \le 2$, so $|Y| \le 5$; a \cn. Thus  $\lambda (X\cap V)\leq 3$.  
If $\lambda (X\cap V)\leq 2$, then $X\cap V$ is contained in a triangle, so $V$ is contained in the union of two lines; a \cn\ since $V$ contains a cocircuit that must have six elements and so contain a triangle.  
We deduce that  $\lambda (X\cap V)=3$. Hence $X\cap V \subseteq \fcl(Y\cup U)$.  
Lemma~\ref{seqsep} implies that $X\cap V$ has rank at most three. Thus $V$ is contained in the union of a line $L$ and a plane $P$. It now follows by Lemma~\ref{sixy} that \ref{nope} holds.  

Next we show that 

\begin{sublemma}
\label{nopeB}
(B) does not hold.
\end{sublemma}

Assume that (B) holds. 
Since $\lambda (X\cap U)\leq 3$ and $X\cap U$ is properly contained in $X$, either $ X\cap U \subseteq \fcl(Y\cup V))$,  or $\lambda (X\cap U)\leq 2$.  
It follows using Lemma~\ref{seqsep} that $r(X\cap U)\leq 3$.  
Thus, by~\ref{bigger}, $r(X\cap V)\leq 3$.  
If $r(X\cap V)\leq 2$, then $X$ is contained in the union of a plane and a line. Then, arguing as in (A), it follows that $|X| = 6$ or $|X| = 9$ and $\si(M/x)$ is \ifc\ for all $x$ in $X$. Each alternative gives a contradiction. Thus,    
by~\ref{bigger}, $r(X\cap V)=3=r(X\cap U)$ and $|X\cap V|<|X\cap U|\leq 7$.  
Hence $4\leq r(X)\leq 6$.  

Now view $M$ as a restriction of $Q=PG(r-1,2)$, where $r=r(M)$.  
As $(X,Y)$ is an exact $4$-separation, $\cl _Q(X)\cap \cl _Q(Y)$ is a plane $P$ of $Q$.  
Because $Y$ is fully closed, no element of $X$ is in $P$.  
It follows by \ort, since $X$ is a union of cocircuits of $M$, that each triangle that meets an element of $X$ is either contained in $X$ or contains exactly two elements of $X$ with the third element being in $P$.  

We show that 
\begin{sublemma}
\label{5or6}
$r(X)\in\{5,6\}$.  
\end{sublemma}
Suppose not.  
Then $r(X)=4$ and $X\subseteq \cl _Q (X)-P$.  
So $X$ is contained in an $AG(3,2)$-restriction of $M$.  
As $X$ is a cocircuit, $|X|=6$ or $|X|=8$.  
Since $|X\cap U|\neq |X\cap V|$ and each is at least three, it follows that $|X|=8$.  
To have a triangle meeting $X$, there must be an element $y$ of $Y$ in $P$.  
But $y$ is the tip of a binary spike in $X\cup y$ so it is in at least four triangles.  
This \cn\ proves~\ref{5or6}.  

We show next that 
\begin{sublemma}
\label{only5}
$r(X)=5$.  
\end{sublemma}
Suppose not.  
Then $r(X)=6$.  
As $r(X\cap U)=r(X\cap V)=3$, we deduce that $\cl _Q(X\cap U)\cap \cl _Q (X\cap V)=\emptyset$, where we recall that $Q=PG(r-1,2)$ and $P=\cl _Q (X)\cap \cl _Q (Y)$.  

Suppose $\cl _Q(X\cap V)$ meets $P$.  
As $3=\lambda (X)=r(X)+r(Y)-r(M)$, we know that $r(Y)=r(M)-3$.  
Then $\cl _M(Y\cup (X\cap V))$ is a flat with rank at most $r(M)-1$.  
Hence its complement, which is contained in $X\cap U$, contains a cocircuit.  
But this cocircuit contains at least six elements by Lemma~\ref{no5}, so it contains a triangle in $X\cap U$.  
We deduce that $\cl _Q(X\cap V)$ avoids $P$.  
By symmetry, so does $\cl _Q(X\cap U)$.  
It follows that each triangle that meets $X$ is either contained in $X\cap U$ or $X\cap V$, or contains an element of each of $X\cap U,X\cap V$, and $P$.  
If $|X\cap U|=7$, then $M|(X\cap U) \cong F_7$-restriction, so each element in $X\cap U$ is in three triangles contained in $X\cap U$.  
Then each element in $X\cap V$ is contained in three triangles in $X\cap V$, so $M|(X \cap V) \cong F_7$,  and $|X\cap U|=|X\cap V|$; a \cn\ to~\ref{bigger}.  
Thus $|X\cap U|\leq 6$ and~\ref{bigger} implies that $|X\cap V|\leq 5$.  
Thus $X\cap V$ contains an element $v$ that is in at most one triangle in $X\cap V$.  
Hence $v$ is in triangles $\{v,u_1,p_1\}$ and $\{v,u_2,p_2\}$ for some $u_1$ and $u_2$ in $X\cap U$, and $p_1$ and $p_2$ in $P$.  
Take $u_3$ in $X\cap U$ such that $\{u_1,u_2,u_3\}$ is a basis for $X\cap U$.  
Then $\cl (Y\cup \{v,u_3\})$ is a flat of rank at most $r(M)-1$ whose complement, which is contained in $X\cap V$, contains a cocircuit.  
This cocircuit has at most five elements; a \cn\ to Lemma~\ref{no5}.  
Hence~\ref{only5} holds.

We now know that $r(X)=5$.  
It follows, since $r(X\cap U)=r(X\cap V)=3$, that $\cl _Q(X\cap U)\cap \cl _Q(X\cap V)$ is a point $p$ of $Q$.  
Moreover, $r(Y)=r(M)-2$, so $r(\cl _Q(Y)\cap \cl _Q(X\cap U)) =1$ since $r(\cl _Q(Y\cup (X\cap U)))=r(M)$, otherwise $X\cap V$ contains a cocircuit of $M$ that either has fewer than six elements or contains a triangle.  
Similarly, $r(\cl _Q(Y)\cap \cl _Q(X\cap V))=1$.  

The following is an immediate consequence of the fact that $U$ is closed.

\begin{sublemma}
\label{wheresp}
If $p \in X$, then $p \in X \cap U$.
\end{sublemma}

Let $\cl _Q(Y)\cap \cl _Q(X\cap U)=\{s\}$ and $\cl _Q(Y)\cap \cl _Q(X\cap V)=\{t\}$.  
Neither $s$ nor $t$ is in $X$, so 

$$|X\cap U|\leq 6.$$
  
Hence $|X\cap V|\leq |X\cap U|-1\leq 5$.  
Recall that $|X| \ge 9$. As $|X \cap U| \ge |X \cap V|$, it follows that $|X \cap U| \ge 5$. Hence

\begin{sublemma}
\label{cap56}
$|X\cap U|\in\{5,6\}$.  
\end{sublemma}

Call a triangle of $M$ \emph{special} if it contains an element of $X\cap U$, an element of $X\cap V$, and an element of $P$.  
Construct a bipartite graph $H$ with vertex classes $X\cap U$ and $X\cap V$ with $uv$ being an edge, where $u\in X\cap U$ and $v\in X\cap V$, precisely when $\{u,v\}$ is contained in a special triangle.  
Clearly 
\begin{equation}
\label{eq}
\sum _{u\in X\cap U} d_H(u)=\sum _{v\in X\cap V} d_H(v).
\end{equation}

Next we show the following.  
\begin{sublemma}
\label{star2}
Every vertex $x$ of $V(H)-\{ p\}$ has its degree in $\{1,2\}$.  
\end{sublemma}
Let $\{X',X''\}=\{X\cap U, X\cap V\}$ and take $x\in X'$ such that $x\neq p$.  
Let $x''$ be the element of $\cl _Q(X'')\cap P$.  
Thus $x''\in\{s,t\}$.  
Clearly $d_H(x)\leq 3$.  
Assume $d_H(x)=3$.  
Then $\cl _Q(Y\cup x)$ contains $x'$, at least three distinct elements of $X''$, and $x''$.  
Thus $\cl _Q(Y\cup x)$ contains $X''$.  
Hence $E(M)-\cl _M(Y\cup x)$ contains at most five elements of $M$; a \cn\ to the fact that every cocircuit of $M$ has at least six elements.  
Thus $d_H(x)<3$.  

Next suppose that $d_H(x)=0$.  
Then all three triangles containing $x$ are contained in $\cl _M(X')$.  
Thus $M|\cl _M(X')\cong F_7$.  
Hence, for $z\in X''-\cl _M(X')$, the three triangles containing $z$ are contained in $\cl _M(X'')$.  
Thus $M|\cl _M(X'')\cong F_7$.  
Hence $\cl _M(X')\cap \cl _M(X'')$ contains a point of $M$ that is in six triangles; a \cn.  
Thus~\ref{star2} holds.  

Now either 
\begin{enumerate}
\item[(i)] $s=t=p$; or 
\item[(ii)] $s,t$, and $p$ are distinct.  
\end{enumerate}

Suppose that (i) holds.  
Assume first that $p\notin Y$.  
By~\ref{star2}, for $W\in\{U,V\}$, every element of $M|(X\cap W)$ is in a triangle contained in $X\cap W$.  
Thus either $M|(X\cap W)\cong M(K_4)$ and  $\sum _{w\in X\cap W} d_H(w)=6$; or $M|(X\cap W)\cong M(K_4\ba e)$ and  $\sum _{w\in X\cap W} d_H(w)=9$.  
Since $|X\cap U|>|X\cap V|$, we obtain a \cn\ using (\ref{eq}).  
Thus $p\in Y$.  

As $|X\cap U|\in \{5,6\}$ by~\ref{cap56}, we see that $|X\cap U|=5$, otherwise $M|((X\cap U)\cup p)\cong F_7$, and $d_H(x)=0$ for every $x\in X\cap V$; a \cn\ to~\ref{star2}.  
We deduce that $M|((X\cap U)\cup p)\cong M(K_4)$, and $5=\sum _{u\in X\cap U} d_H(u)$.  
Now $p$ is in two triangles in $(X\cap U)\cup p$. Thus, of the three triangles in $\cl_Q(X \cap V)$ containing $p$, at most one contains two elements of $X \cap V$. Hence, using \ref{star2}, we see that  $M|\cl _M(X\cap V)$ comprises two triangles with a single element, not $p$, in common.  
Thus $\sum _{v\in X\cap V} d_H(v)=7$; a \cn\ to Equation~\ref{eq}.  
We conclude that (i) does not hold.  

We now know that $s,t$, and $p$ are distinct. 
We show next that 
\begin{sublemma}
\label{wheresp2}
  $p \in X$.
\end{sublemma}
 
Suppose $p\notin X$.  
Then $|X\cap U|=5$ so $|X\cap V|=4$.  
Thus $\sum _{u\in X\cap U} d_H(u)$ is five when $s\in Y$ and nine otherwise.  
As $d_H(v)<3$ for each $v\in X\cap V$ by~\ref{star2}, it follows that $t\in Y$.  
Then $\sum _{v\in X\cap V} d_H(v)$ is eight or seven depending on whether $M|(X\cap V)$ is $U_{3,4}$ or $U_{2,3}\oplus U_{1,1}$.  
Thus, by (\ref{eq}), we have a \cn.  
Hence \ref{wheresp2} holds.  

Suppose $|X\cap U|=6$.  
Then $s\notin Y$, otherwise there is an element of $(X\cap U)-p$ with degree zero in $H$; a \cn\ to~\ref{star2}.  
Then $\sum _{u\in X\cap U} d_H(u)=6$.  
Suppose $t\in Y$.  
If the line through $\{p,t\}$ contains a third point of $M$, say $q$, then each of the other two lines through $p$ in $\cl _Q(X\cap V)$ contains at most one point of $M$.  
Thus $|X\cap V|=3$ and, as $r(X\cap V)=3$, we see that $\{p,q,t\}$ is the unique triangle in $M|\cl _M(X\cap V)$ containing $q$.  
As this triangle is special, it follows that $d_H(q)=3$; a \cn\ to~\ref{star2}.  
  Evidently the line through $\{p,t\}$ does not contain a third point of $M$.  
We deduce that $M|\cl _M(X\cap V)$ comprises two triangles that have one element, not $p$ or $t$, in common.  
Then $\sum _{v\in X\cap V} d_H(v)=5$; a \cn.  
We deduce that $t\notin Y$.  
Then exactly one of the lines in $\cl _M(X\cap V)$ through $p$ contains exactly three points.  
Since no point of $X\cap V$  has degree three in $H$, it follows that $M|\cl _M(X\cap V)$ comprises two triangles with a point, not $p$, in common.  
As $p\notin X\cap V$, it follows that $\sum _{v\in X\cap V} d_H(v)=7$; a \cn.  
We conclude that $|X\cap U|\neq 6$.  

It remains to consider the case that $|X\cap U|=5$ and $|X\cap V|=4$.  
Then $\sum _{u\in X\cap U} d_H(u)$ is five or nine depending on whether or not $s$ is in $Y$.  
From \ref{wheresp2}, $p\in X$.  
Thus  $M|\left[ (X\cap V)\cup p\right]$ consists of two three-point lines meeting in a point $z$.  
If $z=p$, then $\sum _{v\in X\cap V} d_H(v)$ is four or eight, depending on whether or not $t$ is in $Y$; a \cn.  
Hence $z\neq p$.  
Thus the third element on the line containing $\{p,t\}$ is in $X$.  
Again $\sum _{v\in X\cap V} d_H(v)$ is seven, if $t\notin Y$, or four, if $t\in Y$; a \cn\ to (\ref{eq}).   
We conclude that \ref{nopeB} holds and the lemma follows.
\end{proof}

It is now straightforward to complete the proof of our main result.

\begin{proof}[Proof of Theorem~\ref{bigun}.]
If $M$ has a $4$-cocircuit, then the result follows by Lemma~\ref{no5}. If $M$ has no $4$-cocircuits, then the theorem follows by Lemma~\ref{no4}.
\end{proof}

\section{A (non)-extension}

It is natural to ask whether, for an \ifc\ binary matroid $M$ with every element in exactly three triangles, $\si(M/e)$ is \ifc\ for every element $e$. We now describe an example where this is not the case. 

Begin with $K_{3,3}$ having vertex classes $\{a_1,a_2,a_3\}$ and $\{b_1,b_2,b_3\}$. Form the graph $G$ by adjoining three new vertices $u$, $v$, and $w$, each adjacent to all of $a_1,a_2,a_3,b_1,b_2$, and $b_3$ but not to each other. The vertex-edge incidence matrix of $G$ is the matrix $A$ shown below. 

{\SMALL
$$
\bordermatrix
{
&&&&&&&&&&&&&&&&&&&&&&&&&&&\cr
a_1 & 1 & 1 & 1 & 0 & 0 & 0 & 0 & 0 & 0 &1 & 0 &  0 & 0 & 0 & 0 & 1  & 0 &  0 & 0 & 0 & 0 & 1  & 0 &  0 & 0 & 0 & 0\cr
a_2 & 0 & 0 & 0& 1 & 1 & 1 & 0 & 0 & 0 &0  & 1 &  0 & 0 & 0 & 0 & 0  & 1 &  0 & 0 & 0 & 0 & 0  & 1 &  0 & 0 & 0 & 0\cr
a_3 & 0 & 0 & 0& 0 & 0 & 0 & 1 & 1 & 1 &0  & 0 &  1 & 0 & 0 & 0 & 0  & 0 &  1 & 0 & 0 & 0 & 0  & 0 &  1 & 0 & 0 & 0\cr
b_1 & 1 & 0 & 0& 1 & 0 & 0 & 1 & 0 & 0 &0  & 0 &  0 & 1 & 0 & 0 & 0  & 0 &  0 & 1 & 0 & 0 & 0  & 0 &  0 & 1 & 0 & 0\cr
b_2 & 0 & 1 & 0& 0 & 1 & 0 & 0 & 1 & 0 &0  & 0 &  0 & 0 & 1 & 0 & 0  & 0 &  0 & 0 & 1 & 0 & 0  & 0 &  0 & 0 & 1 & 0\cr
b_3 & 0 & 0 & 1& 0 & 0 & 1 & 0 & 0 & 1 &0  & 0 &  0 & 0 & 0 & 1 & 0  & 0 &  0 & 0 & 0 & 1 & 0  & 0 &  0 & 0 & 0 & 1\cr
u    & 0 & 0 & 0& 0 & 0 & 0 & 0 & 0 & 0 &1  & 1 &  1 & 1 & 1 & 1 & 0  & 0 &  0 & 0 & 0 & 0 & 0  & 0 &  0 & 0 & 0 & 0\cr
v    & 0 & 0 & 0& 0 & 0 & 0 & 0 & 0 & 0 &0  & 0 &  0 & 0 & 0 & 0 & 1  & 1 &  1 & 1 & 1 & 1 & 0  & 0 &  0 & 0 & 0 & 0\cr
w   & 0 & 0 & 0& 0 & 0 & 0 & 0 & 0 & 0 &0  & 0 &  0 & 0 & 0 & 0 & 0  & 0 &  0 & 0 & 0 & 0 & 1  & 1 &  1 & 1 & 1 & 1\cr
}
$$}
Then $M(G)$ is an \ifc\ matroid in which every element is in exactly three triangles. Now adjoin the matrix $B$ to $A$ where $B$ is shown below.
{\SMALL
$$
\bordermatrix
{
&a&b&c&d&e&f\cr
& 1 & 0 & 1 & 1 & 1 & 0 \cr
& 1 & 0 & 1 & 1 & 1 & 0 \cr
& 1 & 0 & 1 & 1 & 1 & 0 \cr
& 1 & 1 & 0& 0 & 1 & 1\cr
& 0 & 1& 1 & 1 & 0 & 1 \cr
& 0 & 1& 1 & 1 & 0 & 1 \cr
& 1 & 1 & 0& 0 & 1 & 1\cr
& 1 & 0 & 0& 1 & 0 & 1\cr
& 0 & 0 & 1& 0 & 1 & 1\cr
}
$$}

The matroid $N$ that is represented by $[A|B]$ has each element in $M(G)$ in exactly three triangles, and each element of $\{a,b,c,d,e,f\}$ is in exactly two triangles. To see this, observe that $N|\{a,b,c,d,e,f\} \cong M(K_4)$. Moreover, no element of $M(G)$ lies on a line with two elements of $\{a,b,c,d,e,f\}$ and it is straightforward to check that no element of $\{a,b,c,d,e,f\}$ is in a triangle with two elements of $M(G)$.

Now take the generalized parallel connection of $M(K_5)$ and $N$ across $\{a,b,c,d,e,f\}$ to get an \ifc\ binary matroid $M$ in which every element is in exactly three triangles. Evidently $\si(M/z)$ is not \ifc\ for all $z$ in $\{a,b,c,d,e,f\}$.

\end{document}

\begin{lemma}
\label{no42}
Let $M$ be an \ifc\ binary matroid in which every element is in exactly three triangles. Suppose $|E(M)| \ge 14$ and $M$ has a 4-cocircuit. Then $M$ has at least eight  elements $e$ such that $\si(M/e)$ is \ifc.
\end{lemma}

\begin{proof} Let $C^*$ be a 4-cocircuit of $M$. Then, by using the three triangles containing each of the elements of $C^*$, we see that $\cl(C^*) - C^*$ contains at least 6 elements. Since $M|\cl(C^*)$ does not have an $F_7$-restriction, it follows that $M|\cl(C^*) \cong M(K_5)$.  By \ort, $C^*$ does not meet another $4$-cocircuit of $M$. 
Let $K = \cl(C^*)$ and $Z = E(M) - \cl(C^*)$.  Assume that the theorem fails. Then

\begin{sublemma}
\label{noguts}
no element of $Z$ is in the guts of a $4-separation $(D^*,E(M) - D^*)$ where $D^*$ is a $4$-cocircuit.
\end{sublemma}

If $M$ has such a $4-cocircuit, then $D^*$ and $C^*$ are disjoint so the lemma holds; a \cn. Thus \ref{no42} holds.

Every element of $Z$ is in three triangles that are contained in $Z$. Hence $|Z| \ge 7$.  Thus we may assume that $Z$ contains an element $z$ such that \si(M/z)$ is not \ifc. For $i$ in $\{1,2,3\}$, let $\{z,f_i,g_i\}$ be the triangles of $M$ containing $Z$. Then $Z$ contains $\{z,f_1,f_2,f_3, g_1,g_2,g_3\}$. Let $\si(M/z) = M' = M/z\ba f_1,f_2,f_3$. Let $(U,V)$ be a \ftv\ of $M'$. Adjoin each $f_i$ to whichever of $U$ and $V$ contains $g_i$, letting $(U'',V'')$ ge the resulting 3-separating partition of $M/z$. Then $(U'' \cup z, V'')$ and $(U'',V'' \cup z)$ are both $4$-separations of $M$. Thus $M$ has an exact $4$-separation with $z$ in the guts.

\begin{sublemma}
\label{noseqq}
$(U,V)$ is a non-sequential $3$-separation of $si(M/z) $. 
\end{sublemma}

Suppose $\si(M/z)$ has a triad $\{\beta, \gamma, \delta\}$. Then either $M$ has $\{f_1,f_2,f_3,g_1,g_2,g_3\}$ as a cocircuit, or, without loss of generality, $M$ has a $4$-cocircuit containing $\{f_1,g_1\}$. In the first case, the lemma holds since $\{f_1,f_2,f_3,g_1,g_2,g_3\}$ avoids $\cl(C^*)$. In the second case, the lemma holds since  $C^*$ does not meet another $4$-cocircuit. Hence \ref{noseqq} holds.

Clearly $si(M/z)|K \cong M(K_5)$ and 
$U$ or $V$ spans $K$. Thus we may assume that $U$ contains $K$. Now either $\fcl(U'' \cup z)$ contains $V''$, or $M$ has a  $4$-separation $(Y,X)$ in which $X$ is minimal and $Y$ is fully closed and contains $\fcl(U'' \cup z)$. Suppose $\fcl(U'' \cup z)$ contains $V''$. Then, by Lemma~\ref{seqsep}, either $V''$ is coindependent, $r(V'') = 3$ and $V'' \subseteq \cl(U'' \cup z)$, or $M$ has a 4-cocircuit contained in $V''$. By Lemma~\ref{no5}, the  latter does not occur. Suppose $r(V'') = 3$. Then as $z \in \cl(V'')$, it follows that $r_{M/z}(V) = 2$; a \cn. We deduce that $M$ has a $4$-separation $(Y,X)$ in which $X$ is minimal and $Y$ is fully closed and contains $\fcl(U'' \cup z)$. By Lemma~\ref{sixy}, either $X$ is a 6-cocircuit such that $\si(M/x)$ is \ifc\ for all $x$ in $X$, or $|X| \ge 9$. In the first case, the lemma holds. Thus we may assume that $|X| \ge 9$. 
Then $X$ contains an element $\alpha$ such that $\si(M/ \al)$ is not \ifc\  Let $\{\al ,f_i,g_i\}$ be a triangle for all $i\in\{1,2,3\}$.  
Now $M'=\si (M/\al)=M/\al \ba f_1,f_2,f_3$ is not \ifc.  
By Lemma~\ref{3conn}, it is \thc.  
Take a \ftv\ $(S',T')$ in $M'$. Then $|S'|, |T'| \ge 4$. 
Add $f_i$ to the side containing $g_i$ for all $i\in\{1,2,3\}$ to obtain $(S'',T'')$.  
Then both $(S''\cup \al ,T'')$ and $(S'',T''\cup \al)$ are exact $4$-separations of $M$. 
Let $(S,T) = (\cl(S''), T'' - \cl(S''))$. Then 

\begin{sublemma}
\label{bigger2}
$r_M(U\cap X)\geq r_M(V\cap X)$, and, when equality holds, $|U\cap X|> |V\cap X|$.  
\end{sublemma}

We show next that 
\begin{sublemma}
\label{nonempty2}
$X\cap U,X\cap V,Y\cap U$, and $Y\cap V$ are all non-empty.  
\end{sublemma}
As $\al\in X\cap U$, the first set is not empty.  
If the second is empty, then, as $\al$ is in the closure of $V=V\cap Y$, we can move $\al$ to $Y$ to get $(X-\al,Y\cup \al)$ as a \ns\ $4$-separation of $M$; a \cn\ to our choice of $(X,Y)$.  
If the third is empty, then $U=X\cap U$, and $(X\cap U,Y\cup V)$ contradicts   our choice of $(X,Y)$.  
Likewise, if the fourth set is empty, then $V=X\cap V$, and $(X\cap V,Y\cup U)$ violates our choice of $(X,Y)$.  
This completes our proof of~\ref{nonempty}.  

By submodularity of the connectivity function, $\lambda _M(X\cup U)+\lambda _M(X\cap U)\leq \lambda _M(X)+\lambda _M(U)=3+3$.  
We now break the argument into the following two cases.  
\begin{enumerate}
\item[(A)] $\lambda (X\cap U)\geq 4$ and $\lambda (X\cup U)=\lambda (Y\cap V)\leq 2$; or 
\item[(B)] $\lambda (X\cap U)\leq 3$.  
\end{enumerate}

We show first that
\begin{sublemma}
\label{nope2}
If (A) holds, then  $V$ is a $6$-cocircuit that contains exactly four elements of $X$ and $\si(M/v)$ is \ifc\ for all $v$ in $V$.  
\end{sublemma}

As $M$ is \ifc, $Y\cap V$ is a triangle, or a triad, or contains at most two elements.  
Clearly, this set is not a triad.  
Suppose $\lambda (X\cap V)\geq 4$. Then, by submodularity again, $\lambda (Y\cap U)\leq 2$, so $|Y\cap U|\leq 3$.  
Then $|Y|\leq 6$, so $Y$ contains and so is a cocircuit. As this cocircuit cannot contain a triangle, it follows that $|Y \cap V| \le 2$, so $|Y| \le 5$; a \cn. Thus  $\lambda (X\cap V)\leq 3$.  
If $\lambda (X\cap V)\leq 2$, then $X\cap V$ is contained in a triangle, so $V$ is contained in the union of two lines; a \cn\ since $V$ contains a cocircuit that must have six elements and so contain a triangle.  
We deduce that  $\lambda (X\cap V)=3$. Hence $X\cap V \subseteq \fcl(Y\cup U))$.  
Lemma~\ref{seqsep} implies that $X\cap V$ has rank at most three. Thus $V$ is contained in the union of a line $L$ and a plane $P$. By orthogonality, no cocircuit of $M$ contains more than four elements of $P$ or more than two elements of $L$. Thus each cocircuit contained in $V$ has at most six elements. Now $r(V) + r^*(V) - |V| = 3$. Hence either $r(V) = 5$ or $r(V) = 4$. In the first case, $V$ is the union of  two 6-cocircuits. Since the symmetric difference of these cocircuits is also a cocircuit, we deduce that $|V| = 9$ and we get a contradiction by using Lemma~\ref{baddie}. 
We deduce that $r(V) = 4$ and $V$ is a $6$-cocircuit that contains exactly four elements of $X$. 
We conclude that~\ref{nope} holds.  

Next we show that 

\begin{sublemma}
\label{nopeB2}
(B) does not hold.
\end{sublemma}

Assume that (B) holds. 
Since $\lambda (X\cap U)\leq 3$ and $X\cap U$ is properly contained in $X$, either $ X\cap U \subseteq \fcl(Y\cup V))$,  or $\lambda (X\cap U)\leq 2$.  
It follows using Lemma~\ref{seqsep} that $r(X\cap U)\leq 3$.  
Thus, by~\ref{bigger}, $r(X\cap V)\leq 3$.  
If $r(X\cap V)\leq 2$, then $X$ is contained in the union of a plane and a line. Then, arguing as in (A), it follows that $|X| = 6$ or $|X| = 9$ and $\si(M/x)$ is \ifc\ for all $x$ in $X$. Each alternative gives a contradiction. Thus,    
by~\ref{bigger}, $r(X\cap V)=3=r(X\cap U)$ and $|X\cap V|<|X\cap U|\leq 7$.  
Hence $4\leq r(X)\leq 6$.  

Now view $M$ as a restriction of $Q=PG(r-1,2)$, where $r=r(M)$.  
As $(X,Y)$ is an exact $4$-separation, $\cl _Q(X)\cap \cl _Q(Y)$ is a plane $P$ of $Q$.  
Because $Y$ is fully closed, no element of $X$ is in $P$.  
It follows by \ort, since $X$ is a union of cocircuits of $M$, that each triangle that meets an element of $X$ is either contained in $X$ or contains exactly two elements of $X$ with the third element being in $P$.  

We show that 
\begin{sublemma}
\label{5or62}
$r(X)\in\{5,6\}$.  
\end{sublemma}
Suppose not.  
Then $r(X)=4$ and $X\subseteq \cl _Q (X)-P$.  
So $X$ is contained in an $AG(3,2)$-restriction of $M$.  
As $X$ is a cocircuit, $|X|=6$ or $|X|=8$.  
Since $|X\cap U|\neq |X\cap V|$ and each is at least three, it follows that $|X|=8$.  
To have a triangle meeting $X$, there must be an element $y$ of $Y$ in $P$.  
But $y$ is the tip of a binary spike in $X\cup y$ so it is in at least four triangles.  
This \cn\ proves~\ref{5or6}.  

We show next that 
\begin{sublemma}
\label{only52}
$r(X)=5$.  
\end{sublemma}
Suppose not.  
Then $r(X)=6$.  
As $r(X\cap U)=r(X\cap V)=3$, we deduce that $\cl _Q(X\cap U)\cap \cl _Q (X\cap V)=\emptyset$, where we recall that $Q=PG(r-1,2)$ and $P=\cl _Q (X)\cap \cl _Q (Y)$.  

Suppose $\cl _Q(X\cap V)$ meets $P$.  
As $3=\lambda (X)=r(X)+r(Y)-r(M)$, we know that $r(Y)=r(M)-3$.  
Then $\cl _M(Y\cup (X\cap V))$ is a flat with rank at most $r(M)-1$.  
Hence its complement, which is contained in $X\cap U$, includes a cocircuit.  
But this cocircuit contains at least six elements by Lemma~\ref{no5}, so it contains a triangle in $X\cap U$; a \cn\ to $M$ being binary.  
We deduce that $\cl _Q(X\cap V)$ avoids $P$.  
By symmetry, so does $\cl _Q(X\cap U)$.  
It follows that each triangle that meets $X$ is either contained in $X\cap U$ or $X\cap V$ or contains an element of each of $X\cap U,X\cap V$, and $P$.  
If $|X\cap U|=7$, then $X\cap U$ is a $F_7$-restriction, and each element in $X\cap U$ is in three triangles contained in $X\cap U$.  
Then each element in $X\cap V$ is contained in three triangles in $X\cap V$, so this set is also a $F_7$-restriction and $|X\cap U|=|X\cap V|$; a \cn\ to~\ref{bigger}.  
Thus $|X\cap U|\leq 6$ and~\ref{bigger} implies that $|X\cap V|\leq 5$.  
Thus $X\cap V$ contains an element $v$ that is in at most one triangle in $X\cap V$.  
Hence $v$ is in triangles $\{v,u_1,p_1\}$ and $\{v,u_2,p_2\}$ for some $u_1$ and $u_2$ in $X\cap U$, and $p_1$ and $p_2$ in $P$.  
Take $u_3$ in $X\cap U$ such that $\{u_1,u_2,u_3\}$ is a basis for $X\cap U$.  
Then $\cl (Y\cup \{v,u_3\})$ is a flat of rank at most $r(M)-1$ whose complement, which is contained in $X\cap V$, contains a cocircuit.  
This cocircuit has at most five elements; a \cn\ to Lemma~\ref{no5}.  
Hence~\ref{only5} holds.

We now know that $r(X)=5$.  
It follows, since $r(X\cap U)=r(X\cap V)=3$, that $\cl _Q(X\cap U)\cap \cl _Q(X\cap V)$ is a point $p$ of $Q$.  
Moreover, $r(Y)=r(M)-2$, so $r(\cl _Q(Y)\cap \cl _Q(X\cap U)) =1$ since $r(\cl _Q(Y\cup (X\cap U)))=r(M)$, otherwise $X\cap V$ contains a cocircuit of $M$ that either has fewer than six elements or contains a triangle.  
Similarly, $r(\cl _Q(Y)\cap \cl _Q(X\cap V))=1$.  

Let $\cl _Q(Y)\cap \cl _Q(X\cap U)=\{s\}$ and $\cl _Q(Y)\cap \cl _Q(X\cap V)=\{t\}$.  
Neither $s$ nor $t$ is in $X$, so $|X\cap U|\leq 6$.  
Hence $|X\cap V|\leq |X\cap U|-1\leq 5$.  
As $3=r(X)+r^*(X)-|X|$, and $X$ is the union of cocircuits, we know that $X=C^*\cup D^*$, where $C^*$ and $D^*$ are distinct cocircuits in $M$.  
No cocircuit contains more than four elements in any plane of a binary matroid, since otherwise it would contain a triangle.  
Hence $|C^*|$ and $|D^*|$ are each at most eight.  
Lemma~\ref{no5} implies that $|C^*|$ and $|D^*|$ are each in $\{6,8\}$.  
We show that 
\begin{sublemma}
\label{cap562}
$|X\cap U|\in\{5,6\}$.  
\end{sublemma}
It suffices to show that $|X\cap U|\geq 5$.  
Suppose otherwise.  
Then $|X\cap U|\leq 4$, so $|X\cap V|\leq 3$.  
Then $|X|\leq 7$, and $|C^*\btu D^*|$ is a cocircuit of $M$ containing at most two elements; a \cn.  
Thus~\ref{cap56} holds.  

Call a triangle of $M$ \emph{special} if it contains an element of $X\cap U$, an element of $X\cap V$, and an element of $P$.  
Construct a bipartite graph $H$ with vertex classes $X\cap U$ and $X\cap V$ with $uv$ being an edge, where $u\in X\cap U$ and $v\in X\cap V$, precisely when $\{u,v\}$ is contained in a special triangle.  
Clearly 
\begin{equation}
\label{eq2}
\sum _{u\in X\cap U} d_H(u)=\sum _{v\in X\cap V} d_H(v).
\end{equation}

Next we show the following.  
\begin{sublemma}
\label{star22}
Every vertex $x$ of $V(H)-\{ p\}$ has its degree in $\{1,2\}$.  
\end{sublemma}
Let $\{X',X''\}=\{X\cap U, X\cap V\}$ and take $x\in X'$ such that $x\neq p$.  
Let $x''$ be the element of $\cl _Q(X'')\cap P$.  
Thus $x''\in\{s,t\}$.  
Clearly $d_H(x)\leq 3$.  
Assume $d_H(x)=3$.  
Then $\cl _Q(Y\cup x)$ contains $x'$, at least three distinct elements of $X''$, and $x''$.  
Thus $\cl _Q(Y\cup x)$ contains $X''$.  
Hence $E(M)-\cl _M(Y\cup x)$ contains at most five elements of $M$; a \cn\ to the fact that every cocircuit of $M$ has at least six elements.  
Thus $d_H(x)<3$.  

Next suppose that $d_H(x)=0$.  
Then all three triangles containing $x$ are contained in $\cl _M(X')$.  
Thus $M|\cl _M(X')\cong F_7$.  
Hence, for $z\in X''-\cl _M(X')$, the three triangles containing $z$ are contained in $\cl _M(X'')$.  
Thus $M|\cl _M(X'')\cong F_7$.  
Hence $\cl _M(X')\cap \cl _M(X'')$ contains a point of $M$ that is in six triangles; a \cn.  
Thus~\ref{star2} holds.  

Now either 
\begin{enumerate}
\item[(i)] $s=t=p$; or 
\item[(ii)] $s,t$, and $p$ are distinct.  
\end{enumerate}

Suppose that (i) holds.  
Assume first that $p\notin Y$.  
By~\ref{star2}, for $W\in\{U,V\}$, every element of $M|(X\cap W)$ is in a triangle contained in $X\cap W$.  
Thus either $M|(X\cap W)\cong M(K_4)$ and  $\sum _{w\in X\cap W} d_H(w)=6$; or $M|(X\cap W)\cong M(K_4\ba e)$ and  $\sum _{w\in X\cap W} d_H(w)=9$.  
Since $|X\cap U|>|X\cap V|$, we obtain a \cn\ using Equation~\ref{eq}.  
Thus $p\in Y$.  

As $|X\cap U|\in \{5,6\}$ by~\ref{cap56}, we see that $|X\cap U|=5$, otherwise $M|((X\cap U)\cup p)\cong F_7$, and $d_H(x)=0$ for every $x\in X\cap V$; a \cn\ to~\ref{star2}.  
We deduce that $M|((X\cap U)\cup p)\cong M(K_4)$, and $5=\sum _{u\in X\cap U} d_H(u)$.  
Now $p$ is in two triangles in $(X\cap U)\cup p$, so $p$ is in exactly one triangle in $\cl _M(X\cap V)$.  
Thus $M|\cl _M(X\cap V)$ comprises two triangles with a single element, not $p$, in common.  
Thus $\sum _{v\in X\cap V} d_H(v)=7$; a \cn\ to Equation~\ref{eq}.  
We conclude that (i) does not hold.  

We now know that $s,t$, and $p$ are distinct.  
Suppose $p\notin X$.  
Then $|X\cap U|=5$ so $|X\cap V|=4$.  
Thus $\sum _{u\in X\cap U} d_H(u)$ is five when $s\in Y$ and nine otherwise.  
As $d_H(v)<3$ for each $v\in X\cap V$ by~\ref{star2}, it follows that $t\in Y$.  
Then $\sum _{v\in X\cap V} d_H(v)$ is eight or seven depending on whether $M|(X\cap V)$ is $U_{3,4}$ or $U_{2,3}\oplus U_{1,1}$.  
Thus, by Equation~\ref{eq}, we have a \cn.  
Hence $p\in X$.  

Suppose $|X\cap U|=6$.  
Then $s\notin Y$, otherwise there is an element of $(X\cap U)-p$ with degree zero in $H$; a \cn\ to~\ref{star2}.  
Then $\sum _{u\in X\cap U} d_H(u)=6$.  
Suppose $t\in Y$.  
If the line through $\{p,t\}$ contains a third point of $M$, say $q$, then each of the other two lines through $p$ in $M|\cl _M(X\cap V)$ contains at most one point of $M$.  
Thus $|X\cap V|=3$ and, as $r(X\cap V)=3$, we see that $\{p,q,t\}$ is the unique triangle in $M|\cl _M(X\cap V)$ containing $q$.  
As this triangle is special, it follows that $d_H(q)=3$; a \cn\ to~\ref{star2}.  
{\bf ADDED:  Evidently the line through $\{p,t\}$ does not contain a third point of $M$.  
We deduce that $M|\cl _M(X\cap V)$ is isomorphic to two triangles that have one element, not $p$ or $t$, in common.  
Then $\sum _{v\in X\cap V} d_H(v)=5$; a \cn.}  
We deduce that $t\notin Y$.  
Then exactly one of the lines in $\cl _M(X\cap V)$ through $p$ contains exactly three points.  
Since no point of $X\cap V$ {\bf has degree three in $H$}, it follows that $M|\cl _M(X\cap V)$ comprises two triangles with a point, not $p$, in common.  
As $p\notin X\cap V$, it follows that $\sum _{v\in X\cap V} d_H(v)=7$; a \cn.  
We conclude that $|X\cap U|\neq 6$.  

It remains to consider the case that $|X\cap U|=5$ and $|X\cap V|=4$.  
Then $\sum _{u\in X\cap U} d_H(u)$ is six or nine depending on whether or not $s$ is in $Y$.  
Suppose $p\notin X$.  
Then $t\in Y$, otherwise $d_H(v)=3$ for some $v\in X\cap V$; a \cn\ to~\ref{star2}.  
Since $|X\cap V|=4$, we see that $M|\cl _M(X\cap V)$ consists of two triangles meeting in a point $z$.  
It follows that $\sum _{v\in X\cap V} d_H(v)$ is eight if $z=t$ and seven otherwise; a \cn\ to Equation~\ref{eq}.  
We deduce that $p\in X$.  

We now know that $M|\left[ (X\cap V)\cup p\right]$ consists of two three-point lines meeting in a point $z$.  
If $z=p$, then $\sum _{v\in X\cap V} d_H(v)$ is four or eight, depending on whether or not $t$ is in $Y$; a \cn.  
Hence $z\neq p$.  
Thus the line containing $\{p,t\}$ contains an element $q$ of $Y$.  
Again $\sum _{v\in X\cap V} d_H(v)$ is seven, if $t\notin Y$, or four, if $t\in Y$; a \cn\ to Equation~\ref{eq}.  
We conclude that \ref{nopeB} holds.

\end{proof}

The following corollary follows immediately from Theorem~\ref{main} and Lemma~\ref{nospike}.  

\begin{corollary}
There is an element $e$ in $M$ such that $\text{si}(M/e)$ is \ifc\ with no triads.
\end{corollary}

\end{document}

\begin{figure}
\includegraphics[scale=1]{fig2}
\caption{Two planes in $Q$.}
\label{fig2}
\end{figure}

Let $\cl _Q(X\cap U)$ be $P_1$ and $\cl _Q(X\cap V)$ be $P_2$, as shown in Figure~\ref{fig2}.  
Suppose that $|X\cap U|=6$.  
Without loss of generality, $e\notin X\cap U$.  
If both $C^*$ and $D^*$ meet $P_1$ in three elements, then $C^*$ or $D^*$ contains a triangle; a \cn.  
Without loss of generality, $\{a,b,f,g\}\subseteq C^*$.  
Then $C^*$ avoids $\{c,d\}$, so $\{c,d\}\subseteq D^*$.  
Then we may assume that $\{b,g\}\subseteq D^*$, by \ort\ and the fact that $M$ is binary.  
Suppose that point $p$, the point common to $P_1$ and $P_2$, is $w$ in $P_2$.  
Without loss of generality, $p\in\{e,f,g\}$.  
By Lemma~\ref{notfig}, we know that $C^*\cup D^*$ is not contained in a plane and a line, so we may assume that $\{v,t,z\}\subseteq C^*\cup D^*$.  
If $p=e$, then \ort\ implies that $C^*$ contains $\{t,z\}$ or avoids it, and likewise for $D^*$.  
Without loss of generality, $\{t,z\}\subseteq C^*$.  
Lemma~\ref{nospike} implies that $C^*$ has another element in $P_2$, so $|C^*|=8$ and we may assume that $\{u,v\}\subseteq C^*$.

We show that 
\begin{sublemma}
$X$ contains no $6$-element cocircuit.  
\end{sublemma}
Suppose $C^*$ is a $6$-element cocircuit contained in $X$.  
Then $C^*$ has four elements in $P_1$ or $P_2$, or else $C^*$ has three elements in each of $P_1$ and $P_2$.  

Suppose first that $C^*$ has four elements in $P_1$.  
If $e\in C^*$, then $P_2$ has three elements in $C^*$.  
Without loss of generality, these elements are $w,v$, and $x$.  
Since $X$ is not the union of a plane and a line by Lemma~\ref{notfig}, 

Moreover, $r(Y)=r(M)-2$, 

\begin{figure}
\includegraphics[scale=1]{fig3}
\caption{Two planes.}
\label{pp}
\end{figure}

We will use Figure~\ref{pp} to discuss the positions of the elements in $X$, where the left plane represents possible points in $X\cap U$ and plane $P_2$ represents possible points in $X\cap V$.  

As $3=r(X)-r^*(X)-|X|$, the quantity $|X|-r^*(X)$ is equal to $r(X)-3$.  
Since $X$ is the union of cocircuits, it follows that 
\begin{sublemma}
\label{noco}
$X$ is the union of $(r(X)-3)$ cocircuits such that no cocircuit in this set is the symmetric difference of two of the others.  
\end{sublemma}


We show that 
\begin{sublemma}
no cocircuit in $X$ has exactly six elements.  
\end{sublemma}



We show that 
\begin{sublemma}
\label{only6}
every cocircuit in $X$ has exactly six elements.  
\end{sublemma}
Suppose $C^*$ is a cocircuit in $X$ and $|C^*|\neq 6$.  
No cocircuit contains a triangle, therefore $X$ contains at most four elements in each of the planes in Figure~\ref{pp}.  
Lemma~\ref{noodd} and Lemma~\ref{no5} imply that every cocircuit in $X$ contains six or eight elements, and $C^*=8$.  

Suppose $r(X)=4$.  
Then~\ref{noco} implies that $X=C^*$.  
Without loss of generality, we may assume that positions $u$ and $y$ are equal to $a$ and $e$, respectively, in Figure~\ref{pp}, and $X$ occupies positions $b,c,d,g,v,w,x$, and $z$.  
As $|X\cap U|>|X\cap V|$ by~\ref{bigger}, we know that $X$ also contains $a,f$, or $e$; a \cn\ to the fact that $X$ has eight elements.  

Suppose next that $r(X)=5$.  
Then we may assume that $c$ is the third position on a line containing $u$ and $y$, and the rest of the positions in the planes in Figure~\ref{pp} are distinct.  
Without loss of generality, $C^*$ occupies positions $a,b,f,g,u,v,x$, and $y$.  
Then $c$ is unoccupied in $M$, since no element is in four triangles.  
By~\ref{noco}, $X=C^*\cap D^*$, for some cocircuit $D^*$.  
As $|X\cap U|>|X\cap V|$ by~\ref{bigger}, at least one position in $\{d,e\}$ is occupied by $X$.  
Without loss of generality, $e\in X$, and $e\in D^*$.  
As $|D^*|\in\{6,8\}$, and $C^*\btu D^*$ is a cocircuit in $X$, we know that $D^*$ has another element in $X-C^*$.  
If there is another element in plane $P_2$ of Figure~\ref{pp}, then~\ref{bigger} implies that there is another element on the plane $P_1$.  
So we may assume that $d\in X$.  
Then $d\in D^*$ and, by \ort, we may assume that so are $a$ and $b$.  
As $X$ contains only six elements in plane $P_1$, it contains at most five elements in plane $P_2$, so we may assume that $y\notin X$.  
Now $D^*$ contains two or four elements in $P_2$.  
If $u\in D^*$, then \ort\ and the fact that $M$ is binary imply that $D^*$ contains exactly one element in $\{v,w\}$ and exactly one element in $\{x,z\}$; a \cn.  
By \ort\ with circuit $\{a,b,v,x\}$, cocircuit $D^*$ contains $\{v,x\},\{w,z\}$, or $\{v,w,x,z\}$.  
Since $|C^*\btu D^*|\neq 4$, without loss of generality, $D^*=\{a,b,d,e,v,x\}$.  
By \ort, $M$ does not have elements in positions $u$ or $y$.  
Now $v$ is in three triangles, but none with $u,w,$ or $z$.  
By \ort\ with $C^*$ and $D^*$, every triangle containing $v$ meets $\{a,b\}$; a \cn.  

We complete the proof of~\ref{only6} by considering the case that $r(X)=6$.  
Then the planes in Figure~\ref{pp} are skew.  
Without loss of generality, $C^*$ occupies positions $a,b,f,g,v,w,x$, and $z$.  
By~\ref{noco}, there are two other cocircuits $D_1^*$ and $D_2^*$ in $X$ such that no cocircuit in $\{C^*,D_1^*,D_2^*\}$ is the symmetric difference of the other two.  
We know that $|D_1^*|$ and $|D_2^*|$ are each six or eight and $|C^*\btu D_1^*|,|C^*\btu D_1^*|$, and $|C^*\btu (D_1^*\btu D_2^*)|$ are each six or eight.  
Hence an $8$-cocircuit can share at most five elements with another $8$-cocircuit, and an $8$-cocircuit can share at most four elements with a $6$-cocircuit.  
Thus $|C^*\cup D_1^*|\geq 10$, so plane $P_1$ contains at least six elements in $X$.  
Without loss of generality, $\{c,d\}\subseteq X$.  
Let $c\in D_1^*$.  

If $|D_1^*|=8$, then $D_1^*$ has four elements in each plane.  
Then, without loss of generality, $\{b,c,d,g\}\subseteq D_1^*$.  
Since $|C^*\btu D_1^*|\neq 4$, the cocircuit $D_1^*$ does not contain circuit $\{v,w,x,z\}$.  
Hence it occupies $u$ and $y$ contains two elements in this circuit, say $v$ and $x$.  
Then $X$ must occupy $e$ in plane $P_1$, since more positions are filled in $P_1$ than in $P_2$, and $e\in D_2^*$.  
By \ort\ and the fact that $M$ is binary, we may assume that $\{d,e,f,g\}\subseteq D_2^*$.  
As $v\in C^*$ and $v$ is in three triangles, none of which meets $\{a,b,f,g\}$, since those elements are already in three triangles, we know that the third triangle containing $v$ contains $x$.  
Hence $M$ has an element in this position, although $X$ cannot.  
Therefore \ort\ implies that $D_2^*$ contains four elements in each plane in Figure~\ref{pp}.  
Now $D_2^*$ does not contain $\{u,v,x,y\}$ or $\{v,w,x,z\}$.  
Hence it contains the third $4$-circuit in $P_2$ in Figure~\ref{pp}, $\{u,w,y,z\}$.  
Then $|C^*\btu (D_1^*\btu D_2^*)|=4$; a \cn.  
Therefore every cocircuit but $C^*$ in $X$ contains six elements.  

We continue to assume that $C^*=\{a,b,f,g,v,w,x,z\}$.  
We have deduced that $|D_1^*|=|D_2^*|=6$.  
If $\{c,d\}\subseteq D_1^*$, then without loss of generality, $\{b,c,d,g\}\subseteq D_1^*$.  
Furthermore, by \ort, we know that $D_1^*$ does not occupy position $u$ or $y$, and we may assume that $\{v,w\}$ or $\{v,x\}$ is in $D_1^*$.  
By \ort\ with cocircuits $C^*$ and $D_1^*$, the pair of elements in $D_1^*$ that are in $P_2$ are each in only one triangle that avoids $\{b,g\}$.  
Therefore the two elements are each in two triangles that meet $\{b,g\}$, so $b$ and $g$ are each in four triangles; a \cn.  
We may now assume that $c\in (D_1^*-D_2^*)$ and $d\in (D_2^*-D_1^*)$, and \ort\ implies that $e\notin E(M)$.  
By \ort\ and the fact that $M$ is binary, we may as well assume that $\{a,c,g\}\subseteq D_1^*$ and $\{a,b,d\}\subseteq D_2^*$, and each cocircuit contains three elements in $P_2$ in Figure~\ref{pp}.  
However, $D_1^*$ does not contain three elements in circuit $\{v,w,x,z\}$, since $M$ is binary.  
Without loss of generality, $D_1^*=\{a,c,g,u,v,z\}$.  
If $\{v,z\}$ or $\{w,x\}$ is in $D_2^*$, then \ort\ implies that $u\in D_2^*$, and $|D_1^*\btu D_2^*|$ is $4$ or $8$, respectively; a \cn.  
If $y\in D_2^*$, then \ort\ implies that $D_2^*$ has one element in $\{w,x\}$ and one in $\{v,z\}$, so $|D_1^*\btu D_2^*|=8$; a \cn.  
Evidently $u\in D_2^*$ and, without loss of generality, $D_2^*=\{a,b,d,u,w,z\}$.  
By \ort, $u$ is not in a triangle with $y$, so the triangle containing $y$ that does not meet $P_2$ must also contain an element in $(D_1^*\cap D_2^*)-C^*$.  
This gives a \cn, as no such element exists.  
This completes the proof of~\ref{only6}.  

We continue to use $P_1$ and $P_2$ in Figure~\ref{pp} to represent the possible positions for elements in $X\cap U$ and $X\cap V$, respectively.  

We show that 
\begin{sublemma}
\label{not4}
$r(X)>4$.  
\end{sublemma}
Suppose that $r(X)=4$.  
Then we may assume that positions $u$ and $y$ are the same as $a$ and $e$, respectively, and the remaining named positions are distinct.  
By~\ref{noco}, $X$ contains exactly one cocircuit.  
This together with~\ref{only6} implies that $X$ is a $6$-cocircuit.  
Thus $X$ has at least four elements in $P_1$ by~\ref{bigger}.  
As $X$ contains no triangle, $X$ contains exactly four elements in $P_1$.  
Without loss of generality, $X$ contains $\{a,b,f,g\}$ or $\{b,c,d,g\}$ in $P_1$.  
By the symmetry of these structures, we may assume that $X$ is $\{a,b,f,g,v,z\}$ or $\{b,c,d,g,v,z\}$.  
Then $X$ is contained in a plane and a line in $P_2$, and we have a \cn\ to Lemma~\ref{notfig}.  
Thus~\ref{not4} holds.  

\begin{figure}
\includegraphics[scale=1]{fig_r6}
\caption{Three cocircuits.}
\label{3c}
\end{figure}

Next, we show that 
\begin{sublemma}
\label{x5}
$r(X)=5$.  
\end{sublemma}
Suppose not.  
Then~\ref{not4} implies that $r(X)=6$, and~\ref{noco} implies that $X$ is the union of three distinct $6$-cocircuits, $C^*,D_1^*$, and $D_2^*$, where no cocircuit is the symmetric difference of the other two.  
By~\ref{noco}, the symmetric difference of every pair of distinct cocircuits is six, or twelve in the case that the cocircuits are disjoint.  
If $C^*$ and $D_1^*$ are disjoint, then $|X|\geq 12$.  
Then~\ref{bigger} implies that $P_1$ contains seven elements and $P_2$ contains five elements.  
By \ort\ and the fact that $M$ is binary, $C^*$ and $D_1^*$ each have four elements in $P_1$, so they have at least one element in common; a \cn.  
Hence each pair of cocircuits has three elements in common.  
Let $k$ be the number of elements in $|C^*\cap (D_1^*\cap D_2^*)|$.  
Then the Venn diagram of the three cocircuits is depicted in Figure~\ref{3c}.  
The symmetric difference of all three of the cocircuits has size $4k$.  
Hence $k=3$, and $X$ contains twelve elements, and every element contained in at least two of the cocircuits is contained in all three.  
Then $X$ has seven elements in the positions in $P_1$ and five in $P_2$.  
By \ort, $C^*,D_1^*$, and $D_2^*$ each contain four elements in $P_1$.  
Thus each of the cocircuits has exactly two elements in $P_2$.  
As only five elements in $X$ have positions in $P_2$, two of the cocircuits have an element in common.  
Any element in the intersection of two cocircuits is in the intersection of all three, as $k=3$ in Figure~\ref{3c}.  
Hence the element is common to all three cocircuits, so $X$ has at most three other elements in positions in $P_2$; a \cn.  
Thus~\ref{x5} holds.  

We now know that $r(X)=5$.  
By~\ref{noco} and~\ref{only6}, we know that $X$ is the union of two $6$-cocircuits, $C^*$ and $D^*$, which are disjoint or have exactly three elements in common.  
Thus $|X|$ is nine or twelve.  
We assume that $c$ in Figure~\ref{pp} is on the line with $u$ and $y$, but that all of the elements in Figure~\ref{pp} are distinct.

If $|X\cap U|=7$, then $C^*$ contains at most four elements in $P_1$ and $D^*$ contains its complement in $P_1$.  
Hence $D^*$ contains a triangle; a \cn.  

Evidently, $|X\cap U|\leq 6$, so $|X\cap V|\leq 5$ by~\ref{bigger}.  
Thus $|X|\neq 12$, so $|X|=9$.  
We know then that $|X\cap U|$ is five or six, and the rest of this proof deals with these cases.  

We show that 
\begin{sublemma}
\label{notsix}
$|X\cap U|=5$.  
\end{sublemma}
Suppose that $|X\cap U|=6$.  
Then $X\cap U$ is an $M(K_4)$-restriction.  
If $C^*$ has exactly three elements in this set, then its complement is a triangle, which must be contained in $D^*$; a \cn.  
Evidently $C^*$ and $D^*$ each have four elements in $X\cap U$, but not the same four elements, since $X$ has six elements in this set.  
Then $C^*\cap D^*$ contains two elements in $P_1$ and the third element in this intersection is in $P_2$.  

Suppose that $X\cap U$ fills all of the positions in $P_1$ except $c$.  
Without loss of generality, $C^*$ and $D^*$ contain $\{a,b,f,g\}$ and $\{d,e,f,g\}$, respectively.  
By Lemma~\ref{notfig}, neither $C^*$ nor $D^*$ has a pair of elements in $\{u,y\},\{v,x\}$, or $\{w,z\}$.  
Without loss of generality, $C^*=\{a,b,f,g,u,z\}$ and $\{d,e,f,g,z\}\subseteq D^*$.  
If $x\in D^*$, then $P_1\cup\{u,x,z\}$ is a $4$-separator; a \cn\ to Lemma~\ref{notfig}.  
Without loss of generality, the sixth element in $D^*$ is $y$.  
As $z$ is in three triangles, none of which meet $P_2$, by \ort\ with $C^*$ and $D^*$, \ort\ implies that each of these triangles meets an element in $C^*\cap D^*$ in $P_1$.  
But this set contains only $f$ and $g$; a \cn.  

We continue the proof of~\ref{notsix} by considering the other possibility for $|X\cap U|=6$.  
Suppose that $X\cap U$ occupies position $c$ and fills all of the positions in $P_1$ except one.  
Without loss of generality, $X\cap U$ avoids $a$, and $\{b,c,d,g\}\subseteq C^*$ and $D^*$ contains either $\{b,c,e,f\}$ or $\{d,e,f,g\}$.  
As $C^*$ contains an element in position $c$ and two elements in positions in $P_2$, we know that the four elements in $X$ that are in $P_2\cup c$ do not form a $4$-circuit, since $M$ is binary.  
Thus $X$ contains a triangle and one more element in $P_2\cup c$.  
Without loss of generality, $X$ has elements in positions $u,v$, and $w$, or else $X$ has elements in positions $u,y$, and $w$.  
In the first case, we get a \cn\ to Lemma~\ref{notfig}, so the latter holds.  
Lemma~\ref{notfig} also guarantees that $C^*$ and $D^*$ both contain an element in $P_2$ outside of the line containing $\{u,y\}$.  
Hence $w\in C^*\cap D^*$, and each cocircuit has exactly one element in a position in $\{u,y\}$.   
By \ort\ with $\{u,c,y\}$, we know that $D^*$ contains $c$.  
Without loss of generality, $C^*=\{b,c,d,g,u,w\}$ and $D^*=\{b,c,e,f,w,y \}$.  
Element $w$ is in three triangles.  
Orthogonality implies that these triangles avoid $\{u,v,x,y\}$, and that each of these triangles meets another element in $C^*\cap D^*=\{b,c,w\}$.  
Hence $w$ is in three triangles meeting $\{b,c\}$; a \cn.  
This completes the proof of~\ref{notsix}.  

We know now that $|X\cap U|=5$, so it avoids two of the positions in $P_1$.  
Suppose that it avoids $c$.  
Without loss of generality, $X$ occupies positions $a,b,d,f$, and $g$.  
Since $|X\cap V|=4$, either $X\cap V$ contains no triangle in the positions in $P_2$, or it contains exactly one triangle.  
We may assume therefore that $X\cap V$ is either $\{v,w,x,z\}$ or $\{u,v,w,x\}$.  
We assume first that $X\cap V=\{v,w,x,z\}$
Then $X\cap V$ is a $4$-circuit, so $C^*$ and $D^*$ each contain an even number of elements in $X\cap V$.  
Thus $C^*$ and $D^*$ each contain an even number of elements in $P_1-\{c,e\}$.  
But the cocircuit that contains $d$ contains exactly two more elements in $P_1-\{c,e\}$; a \cn.  
Evidently, $X\cap V=\{u,v,w,x\}$.  
We may assume that $x\in C^*$.  
As $\{u,v,w\}$ is not contained in either $C^*$ or $D^*$, each cocircuit contains exactly two elements in it.  
Hence the three elements in $C^*\cap \{u,v,w,x\}$ span position $c$.  
As $M$ is binary, we deduce that $\{a,b\}$ and $\{f,g\}$ each contain exactly one element in $C^*$, and the sets $\{b,d,g\}$ and $\{a,d,f\}$ each contain an odd number of elements in $C^*$.  
In order to satisfy these conditions, $C^*$ contains either $\{b,d,g\}$ or $\{a,d,f\}$.  
Without loss of generality, $\{b,d,g\}\subseteq C^*$.  
By \ort\ with $\{a,b,x,v\}$, we know that $v\notin C^*$, so $C^*=\{b,d,g,u,w,x\}$.  
Thus $\{a,f,v\}\subseteq D^*$.  
If $\{b,g,x\}$ meets $D^*$, then $\{b,g,x\}\subseteq D^*$, and $D^*$ meets circuit $\{a,b,u,w,x\}$ in three elements; a \cn.  
Hence $\{d,u,w\}\subseteq D^*$, so triangle $\{u,v,w\}\subseteq D^*$; a \cn.  

For the final case in this proof, we consider that $X$ occupies $c$ and avoids two other positions in $P_1$.  
Either the two positions are in a triangle with $c$, or not.  
Without loss of generality, $X\cap U$ occupies the positions in $\{a,b,c,d,e\}$ or $\{a,b,c,e,f\}$.  
Suppose the former.  
We may assume that $c\in C^*$.  
Then $C^*$ contains exactly one element in $\{a,b\}$ and exactly one element in $\{d,e\}$, so $\{a,e\}\subseteq C^*$, without loss of generality.  
Now $(X\cap V)\cup c$ contains four elements of $C^*$, so these elements form a $4$-circuit, so we may assume that $C^*=\{a,c,e,u,v,z\}$.  
Without loss of generality, the last element in $X\cap V$ is $w$, which is in $D^*$, together with $b$ and $d$.  
Now $\{a,e,z\}$ is contained in $D^*$ or avoids $D^*$.  
If $D^*=\{b,d,w,a,e,z\}$, then it meets triangle $\{u,v,w\}$ in a single element; a \cn\ to \ort.  
Hence $D^*=\{b,d,w,u,v,c\}$, so $D^*$ contains a triangle; a \cn.  
Evidently $X\cap U$ does not occupy the positions in $\{a,b,c,d,e\}$.  

Therefore $X\cap U$ occupies $\{a,b,c,e,f\}$.  
Suppose $\{a,c\}\subseteq C^*$.  
Then we may assume that $e\in C^*$ and $\{b,f\}\subseteq D^*$.  
Furthermore, $(X\cap V)\cup c$ contains four elements in $C^*$, so these elements are a $4$-circuit.  
Without loss of generality, $C^*$ occupies $u,v$, and $z$ in $P_2$.  
We may assume that the fourth position that $X$ occupies in $P_2$, which must be in $D^*$, is $w$.  
If $c\in D^*$, then $D^*$ occupies $e$ and, without loss of generality, $v$.  
Then \ort\ implies that every triangle containing $v$ other than $\{u,v,w\}$ meets $\{c,e\}$, since $C^*\cap D^*=\{c,e,v\}$.  
As $v$ is in three triangles, $v$ is in a triangle with $c$, and a triangle with $e$.  
Now $e\in\{a,e,f\}$ and $e$ is in a triangle containing $v$.  
By \ort\ with $C^*$ and $D^*$, the third triangle containing $e$ contains $c$; a \cn\ to the fact that $c$ is in only three triangles.  
Evidently $c\notin D^*$.  
Without loss of generality, $D^*=\{a,b,f,v,w,z\}$, so $C^*\cap D^*=\{a,v,z\}$.  
The three triangles containing $v$ include one with $z$ and one with $a$.  
Likewise, the third triangle besides $\{v,y,z\}$ and $\{c,w,z\}$ containing $z$ must contain $a$; a \cn\ to the fact that $a$ is in only three triangles.  
This completes our consideration of the case that $\{a,c\}\subseteq C^*$.  

As $c\in C^*$, and $a\notin C^*$, we know that $\{b,c,e,f\}\subseteq C^*$, and $a\in D^*$, so we may assume that $c\notin D^*$.  
Then $b\in D^*$, and we may assume that $f\in D^*$.  
Now $D^*$ has three elements occupying positions in $P_2$, so the elements in $X$ that are in $P_2$ do not  form a $4$-circuit.  
We may assume then that $X$ occupies positions $u,v,w$, and $z$ in $P_2$.  
By \ort\ and the fact that $M$ is binary, without loss of generality, $D^*=\{a,b,f,v,w,z\}$ and $C^*=\{b,c,e,f,u,w\}$.  
Positions $x$ and $g$ are unoccupied in $M$, by \ort\ with $C^*$ and $D^*$.  
Now $C^*\cap D^*=\{b,f,w\}$.  
By \ort\ with $C^*$ and $D^*$, there is at most one triangle containing $b$ that is not in $P_1$, that is, a triangle containing $w$.  
Then $b$ must be in two triangles in $P_1$, so $\{b,d,f\}$ is a triangle in $M$.  
Thus $f$ is in two triangles in $P_1$, and no more, so $\{f,w\}$ must be contained in a triangle.  
Then $w$ is in four triangles; a \cn.  
\end{proof}



\end{document}